\setlist[1]{wide}
\setlist[2]{leftmargin=15mm}
\setlist[enumerate]{label=\rm{(\roman*)}}
\setlist[itemize]{label=\raisebox{0.25ex}{\tiny$\bullet$}}
\renewcommand\subsection{\@startsection{subsection}{1}%
  \z@{.5\linespacing\@plus.7\linespacing}{-.5em}%
  {\bfseries}}
\def\@seccntformat#1{%
  \protect\textup{\protect\@secnumfont
    \ifnum\pdfstrcmp{subsection}{#1}=0 \bfseries\fi
    \csname the#1\endcsname
    \protect\@secnumpunct
  }%
}  
\newcommand{\newabstract}[1]{%
	\par\bigskip
	\csname otherlanguage*\endcsname{#1}%
	\csname captions#1\endcsname
	\item[\hskip\labelsep\scshape\abstractname.]
}
\numberwithin{equation}{section}
\newtheorem{theorem}{Theorem}[section]
\newtheorem{proposition}[theorem]{Proposition}
\newtheorem{lemma}[theorem]{Lemma}
\newtheorem{question}[theorem]{Question}
\theoremstyle{definition}
\newtheorem{remark}[theorem]{Remark}
\newtheorem{definition}[theorem]{Definition}
\newtheorem{example}[theorem]{Example}
\newtheoremstyle{customNumber}
     {}          
     {}          
     {\itshape}  
     {}          
     {\bfseries} 
     {.}         
     { }         
     {\thmname{#1}\thmnumber{ #2}\thmnote{ #3}}
\theoremstyle{customNumber}
\renewcommand{\phi}{\varphi}
\DeclareMathOperator{\Q}{\mathbb{Q}}
\DeclareMathOperator{\C}{\mathbb{C}}
\DeclareMathOperator{\A}{\mathbb{A}}
\DeclareMathOperator{\R}{\mathbb{R}}
\DeclareMathOperator{\Z}{\mathbb{Z}}
\DeclareMathOperator{\F}{\mathbb{F}}
\DeclareMathOperator{\PP}{\mathbb{P}}
\DeclareMathOperator{\Spec}{\mathrm{Spec}}
\DeclareMathOperator{\Pic}{\mathrm{Pic}}
\DeclareMathOperator{\id}{\mathrm{id}}
\DeclareMathOperator{\Aut}{\mathrm{Aut}}
\DeclareMathOperator{\Gal}{\mathrm{Gal}}
\DeclareMathOperator{\GL}{\mathrm{GL}}
\DeclareMathOperator{\Proj}{\mathrm{Proj}}
\newcommand{\easy}{\operatorname}
\newcommand{\RomanNumeralCaps}[1]
{\MakeUppercase{\romannumeral #1}}
\title[A rational affine surface with uncountably many real forms]
{A smooth complex rational affine surface with uncountably many real forms \\ ~ \\Une surface affine rationnelle complexe lisse avec un ensemble ind\'enombrable de formes r\'eelles}
\author{Anna Bot}
\address{}
\subjclass[2020]{14J50, 14R05, 14J26, 14P05}
\keywords{Rational surfaces, affine surfaces, real forms, real structures. \\ The author acknowledges support by the Swiss National Science Foundation Grant ``Geo-metrically ruled surfaces'' 200020-192217.} 
\begin{document}

\selectlanguage{english}
\begin{abstract}
	We exhibit a smooth complex rational affine surface with uncountably many nonisomorphic real forms.
\newabstract{french}
	Nous pr\'esentons une surface affine rationnelle complexe lisse avec un ensemble ind\'enombrable de formes r\'eelles non isomorphes.
\end{abstract}

\thanks{}
\maketitle

\tableofcontents

\section{Introduction}

The study of real forms of a complex algebraic variety has seen substantial progress: the finiteness of isomorphism classes over $\mathbb{R}$ of real forms has been proven for example for abelian varieties~\cite{MR181643, MR678890}, projective algebraic surfaces of Kodaira dimension greater than or equal to one~\cite[Appendix D]{MR1795406}, minimal projective algebraic surfaces~\cite[Appendix D]{MR1795406}, del~Pezzo~surfaces~\cite{MR1954070} and compact hyperkähler manifolds~\cite{MR4023392}. Lesieutre constructed in \cite{MR3773792} the first example of a complex projective variety with infinitely many nonisomorphic real forms, and the construction inspired the examples found by Dinh and Oguiso~\cite{MR3934593} of complex projective varieties in every dimension~$d\geq 2$ of Kodaira dimension~$d-2$, and the ones by Dinh, Oguiso and Yu~\cite{MR4444125} of smooth rational projective varieties of dimension greater or equal to three.

Using a field-theoretic approach, \cite{labinet} proves that any projective variety can have at most countably many real forms. We give a short proof in the particular case of rational projective surfaces in Proposition~\ref{prop: at most countably for projective}. Yet, in the affine case, a priori no restrictions hold on the number of real forms. However, such varieties may only arise in dimension at least~$2$ --- see Proposition~\ref{prop: curves finitely many real forms} for a proof of why smooth affine curves admit at most finitely many real forms. Dubouloz, Freudenburg and Moser-Jauslin~\cite{MR4234102} constructed rational affine varieties of dimension greater or equal to four with at least countably infinitely many pairwise nonisomorphic real forms. This prompts the question: can there exist a rational affine surface with uncountably many pairwise nonisomorphic real forms? We can answer in the affirmative, and even describe a moduli space which parametrises an uncountable collection of nonisomorphic real forms of the suitable affine surface:

\begin{theorem}\label{thm: new main theorem}
	There exists a real affine variety $\mathcal{X}$ and a flat morphism $p \colon \mathcal{X} \rightarrow \A^1_{\R} \smallsetminus \{0,1\}$ defined over $\R$ satisfying the following properties:
	\begin{enumerate}[leftmargin=*]
		\item The closed fibres of $p_{\C}\colon \mathcal{X}_{\C} \rightarrow \A^1_{\C} \smallsetminus \{0,1\}$ are pairwise isomorphic smooth complex rational affine surfaces. \label{item: main theorem, 1}
		\item The fibres of $p$ over $\R$-rational closed points are pairwise birationally diffeomorphic smooth real rational affine surfaces. \label{item: main theorem, 2}
		\item Two fibres $p^{-1}(\alpha)$, $p^{-1}(\beta)$ with $\alpha, \beta \in \R \smallsetminus \{0,1\}$ are isomorphic if and only if $\alpha\beta=1$ or $\alpha =\beta$. \label{item: main theorem, 3}
	\end{enumerate}
\end{theorem}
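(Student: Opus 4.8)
The plan is to produce $p\colon \mathcal{X}\to \A^1_\R\setminus\{0,1\}$ as an explicit algebraic family whose fibre $X_\alpha := p^{-1}(\alpha)$ is a smooth rational affine surface obtained by deleting an \emph{intrinsic} boundary divisor from a rational projective surface $\bar X_\alpha$, itself a blow-up of a minimal rational surface (a Hirzebruch surface, or $\PP^1\times\PP^1$) at a configuration of points moving algebraically with the parameter $t$. Concretely I would cut $\mathcal{X}$ out of $\A^N_\R\times(\A^1_\R\setminus\{0,1\})$ by polynomials in the ambient coordinates and $t$, taking $p$ to be the projection to $t$. Once $\mathcal{X}$ is seen to be integral and $p$ dominant, flatness is automatic: any dominant morphism from an integral scheme to a regular integral scheme of dimension one is flat.

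For (i) I would exhibit, for each pair $\alpha,\beta\in\C\setminus\{0,1\}$, an isomorphism $X_\alpha^\C\xrightarrow{\sim}X_\beta^\C$ induced by an automorphism of the ambient minimal surface carrying the $\alpha$-configuration to the $\beta$-configuration. The decisive feature is that this normalising automorphism is defined over $\C$ but not over $\R$ --- typically it requires a square root of a quantity such as $\alpha$ or $\alpha\beta$ --- so it does not descend to the real forms; this is exactly what separates (i) from (iii). For (ii) the real fibres are all real forms of the single complex surface $X_\alpha^\C$, hence have diffeomorphic (nonempty) real loci of the same topological type; to upgrade this to birational diffeomorphism I would write down birational maps defined over $\R$, assembled from blow-ups and blow-downs centred at conjugate pairs of non-real points, which are biregular on real points and identify $X_\alpha(\R)$ with $X_\beta(\R)$.

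The substance of the theorem is (iii). The forward implication is constructive: for $\alpha=\beta$ take the identity, and for $\alpha\beta=1$ produce an explicit $\R$-isomorphism coming from a symmetry of the construction --- an involution of the boundary (for instance a swap of its two distinguished components) that inverts the parameter. For the converse I would show that any abstract $\R$-isomorphism $\phi\colon X_\alpha\to X_\beta$ extends to the compactifications. This hinges on the boundary being intrinsically characterised --- say as the minimal simple-normal-crossings boundary, whose dual graph and self-intersections are determined by the surface, or via the log-canonical class --- so that $\phi$ must preserve it and extends to $\bar\phi\colon\bar X_\alpha\to\bar X_\beta$. Contracting the exceptional curves, $\bar\phi$ descends to an automorphism of the minimal model sending the $\alpha$-configuration to the $\beta$-configuration over $\R$, and from the positions of these points one reads off a real projective invariant --- a cross-ratio, equivalently the $\mathbb{G}_m$-parameter --- that $\phi$ must preserve, with residual ambiguity exactly $\alpha\mapsto 1/\alpha$.

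The main obstacle, where I expect the real work to concentrate, is proving that the boundary is intrinsic and that the resulting real invariant is complete. Over $\C$ the configuration can always be normalised, so any separating invariant must be genuinely sensitive to the real structure; the argument has to rule out extra real isomorphisms that the large automorphism group of an affine surface might a priori supply but which fail to extend to $\bar X_\alpha$. Establishing that every real isomorphism does extend, and that the real cross-ratio is a complete, symmetry-faithful invariant, is the crux --- and it is precisely this that yields uncountably many pairwise nonisomorphic real forms, parametrised by $(\A^1_\R\setminus\{0,1\})/(\alpha\sim1/\alpha)$.
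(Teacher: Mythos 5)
Your overall architecture matches the paper's: an explicit family over $\A^1_{\R}\smallsetminus\{0,1\}$ (flatness from integrality plus dominance over a regular one-dimensional base is correct), complex isomorphisms between fibres obtained by normalising a moving point configuration, real loci compared via maps biregular on real points, and the forward direction of (iii) by an explicit symmetry inverting the parameter. The genuine gap is in your mechanism for the converse of (iii). You propose to show that every $\R$-isomorphism $\phi\colon X_\alpha\to X_\beta$ extends to the compactifications because the boundary is \emph{intrinsic} --- ``the minimal simple-normal-crossings boundary, whose dual graph and self-intersections are determined by the surface, or via the log-canonical class.'' No such characterisation can exist here, and statement (i) itself is the reason: isomorphisms between the compactified pairs preserve the curves of negative self-intersection, hence descend to linear maps of $\PP^2_{\C}$ preserving the blown-up configuration, and these force $\alpha=\beta$ or $\alpha\beta=1$ \emph{already over} $\C$. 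Since by (i) all complex fibres are isomorphic, there must exist complex isomorphisms that do \emph{not} extend to the boundary (these surfaces are Gizatullin surfaces with huge automorphism groups, which is exactly the point of the construction). Any boundary-rigidity argument formulated purely in terms of complex log geometry (SNC minimality, log-canonical class) would apply verbatim to those complex isomorphisms and is therefore provably false. The extension statement is true only for \emph{real} isomorphisms, and its proof must use the anti-regular involution in an essential way. This is what the paper does (Lemma 6.2): resolving the induced birational map $Y_\alpha\dashrightarrow Y_\beta$, a base point would force the contraction of $L_z$ and then turn the strict transforms of the two boundary $(-2)$-curves $L_{x+iy}$, $L_{x-iy}$ into $(-1)$-curves that would have to be contracted; but these two curves are exchanged by the real structure, so neither can be contracted without the other, a contradiction. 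Your hedge that ``the argument has to rule out extra real isomorphisms'' names the crux correctly, but the tool you reach for cannot rule them out, because it cannot see the real structure.

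A secondary error: in (ii) you assert that the real fibres, being real forms of one complex surface, ``hence have diffeomorphic (nonempty) real loci of the same topological type.'' This implication is false in general --- distinct real forms of a single complex variety typically have topologically different real loci, and distinguishing real loci is a standard way of distinguishing real forms, so nothing of this sort can be taken for granted. Your fallback plan (exhibit real birational maps, composed of blow-ups at conjugate pairs of non-real points, that are biregular on real points) is the correct one and is essentially what the paper does: it observes that among the five blown-up points only the origin is real and the two removed lines meet the real locus only there, so each real fibre has real locus exactly the blow-up of $\A^2_{\R}$ at the origin, making all real fibres birationally diffeomorphic to one another.
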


Statement~\ref{item: main theorem, 2} implies that the real forms appearing in this moduli space arise for algebraic reasons only and not because of some topological obstructions. Also, choosing only $\Q$-rational closed points $\alpha \in \A^1_{\R}(\Q)$, we find infinitely (yet countably!) many $\Q$-forms using the above construction; the same holds for any subfield of $\R$. We further point out that the theory and the proofs would also work for a real closed field $\mathbf{k}$ and its algebraic closure $\overline{\mathbf{k}}$ in lieu of $\R$ and $\C$, to the effect that the classes of nonisomorphic $\mathbf{k}$-forms constitute a set of cardinality $|\mathbf{k}|$.

The question whether there exists a smooth complex rational projective surface with an infinite number of isomorphism classes of real forms was posed for example by~\cite[pages 232-233]{MR1795406}, \cite[Problem, page 1128]{MR3473660}, \cite[page 943]{MR3934593} or \cite[{Question 1.5}]{MR4444125}, and an unboundedness result was given in~\cite{bot2021real}. Dinh, Oguiso and Yu~\cite{MR4529417} provided an example of a smooth complex rational projective surface with infinitely many pairwise nonisomorphic real forms, using a nifty construction involving the Kummer K3 surface associated to the product of two elliptic curves. With this, rational surfaces with as many pairwise nonisomorphic real forms as possible have been found, both in the affine and the projective case.

The complex rational affine surface constituting a closed fibre $p_{\C}^{-1}(\alpha)$ of the fibration in Theorem~\ref{thm: new main theorem}~\ref{item: main theorem, 1} with $\alpha \in \C \smallsetminus \{0,1\}$ can be given explicitly as  the blow-up of $\A^2_{\C}$ in the points $(0,0)$, $(1,0)$, $(\alpha, 0)$, $(0,1)$ and $(0,\alpha)$, with the strict transforms of the lines $x=0$ and $y=0$ removed, see Section~\ref{subsec: first}. Such an affine surface can be completed by a linear chain of smooth rational curves and is thus called a \emph{Gizatullin surface} (see Section~\ref{section: moduli}) --- in this case, the curves are the strict transforms of $x=0$ and $y=0$ as well as the line at infinity. In fact, the results in this text were first obtained using the description of the automorphism group of the Gizatullin surface given in~\cite{MR3340326} by Blanc and Dubouloz.

Most likely, the real forms we exhibit in this article are by far not all one can construct, so it would be interesting to determine:

\begin{question}
	Are there real forms of a closed fibre of $p_{\C}$ other than $p^{-1}(\alpha)$ for $\alpha \in \R \smallsetminus \{0,1\}$, and can one find some not birationally diffeomorphic to $p^{-1}(\alpha)$? 
\end{question}

As real forms are a special case of twisted forms, the following questions pose themselves.
\begin{question}
	Fix a field $\mathbf{k}$ with separable closure $\mathbf{k}_s$. Is it possible to construct infinitely/uncountably many affine varieties $X$ defined over $\mathbf{k}$ and pairwise nonisomorphic over $\mathbf{k}$ such that their base changes $X \times_{\Spec(\mathbf{k})} \Spec(\mathbf{k}_s)$ are all isomorphic over $\mathbf{k}_s$?
\end{question}
\begin{question}
	What about the special case of $\mathbf{k}=\Q_p$? What about $\mathbf{k}=\C(t)$?
\end{question}

The paper is structured as follows: to prove Theorem~\ref{thm: new main theorem}, we proceed by introducing some preliminary notions as well as the correspondence between complex schemes with a real structure and real schemes in Section~\ref{section: preliminaries}. Then, we describe the moduli space in Section~\ref{section: moduli}, with two isomorphic descriptions of closed fibres in~\ref{subsec: first} and~\ref{subsec: second}. Subsequently, Sections~\ref{section: closed fibres are pairwise isomorphic},~\ref{section: birational diffeomorphisms} and~\ref{section: nonisomorphic real forms} are devoted to proving~\ref{item: main theorem, 1},~\ref{item: main theorem, 2} and~\ref{item: main theorem, 3} of Theorem~\ref{thm: new main theorem}, respectively.

\textbf{Acknowledgements.} I would like to express my thanks to my PhD advisor J\'er\'emy Blanc for introducing me to this topic and discussing it with me. Moreover, I am very grateful to Marat Gizatullin for his thought-provoking comments and pointing out some carelessness in the first version of this text. Then, my thanks go to Adrien Dubouloz and Johannes Huisman for pointing out the moduli interpretation. Lastly, I am indebted to the referees whose reports helped to drastically simplify the exposition and allow the paper to be self-contained.

\section{Preliminaries} \label{section: preliminaries}

\subsection{Underlying definitions and results}
For any $\R$-scheme $X_0$, denote by $(X_0)_{\C}=X_0\times_{\R} \Spec(\C)$ the \textit{complexification} of $X_0$, where the morphism of schemes $\Spec(\C)\rightarrow \Spec(\R)$ is induced by the Galois extension $\C/\R$. Similarly, if a morphism $\psi \colon X_0 \rightarrow Y_0$ of $\R$-schemes is given, then we denote by $\psi_{\C} \colon (X_0)_{\C} \rightarrow (Y_0)_{\C}$ the base change from $\R$ to $\C$, and refer to it as the \textit{complexification} of $\psi$. 

A \textit{real form} of a complex scheme $X$ is a real scheme $X_0$ together with a $\mathbb{C}$-isomorphism $(X_0)_{\C} \overset{\sim}{\rightarrow} X$. Instead of real forms, one may study real structures, as the two are closely linked. A \textit{real structure} on a complex scheme $X$ is an anti-regular involution $\rho: X \rightarrow X$; by anti-regularity we mean that the diagram
\begin{center}
	\begin{tikzcd}
		X \arrow[r, "\rho"] \arrow[d]  &  X \arrow[d] \\
		\easy{Spec}(\C) \arrow[r, "z \mapsto \overline{z}"]  & \easy{Spec}(\C)
	\end{tikzcd}
\end{center}
commutes. Given two real structures $\rho$ on $X$ and $\rho'$ on $X'$, a morphism $\theta \colon X \rightarrow X'$ is called \emph{real} if $\theta \circ \rho = \rho' \circ \theta$, and we write $\theta \colon (X, \rho) \rightarrow (X', \rho')$. The real structures $\rho$ on $X$ and $\rho'$ on $X'$ are \textit{equivalent} if there exists an isomorphism $\psi\colon (X, \rho) \overset{\sim}{\rightarrow} (X', \rho')$.

\begin{remark} \label{remark: blow-up}
	By the universal property of the blow-up, whenever we blow up a $\C$-scheme $Y$ with real structure $\rho'$ in a closed subscheme invariant under $\rho'$, then there exists a real structure $\rho$ on the blow-up $\pi\colon X \rightarrow Y$ such that $\pi$ is real, see for example~\cite[\RomanNumeralCaps{2}.7.14]{MR0463157}.
\end{remark}
 
On the one hand, considering a real form $X_0$ of $X$ with $\varphi \colon (X_0)_{\C} \overset{\sim}{\rightarrow}X$, we can set $\rho = \varphi\circ\rho_{X_0}\circ\varphi^{-1}$, where $\rho_{X_0} = \easy{id}_{X_0} \times \easy{Spec}(z \mapsto \overline{z})$ is the \emph{canonical real structure} on $(X_0)_{\C}$. On the other hand, suppose we have a real structure $\rho$ on $X$ such that $X$ is covered by $\rho$-stable affine open subsets. It then follows from~\cite[{\RomanNumeralCaps{8}, Cor.~$7.7$}]{MR0217087} that there exists a real scheme $Y$ with underlying set $X/\langle \rho \rangle$ such that $(Y_{\C}, \rho_{Y}) \cong (X, \rho)$. In fact,  \cite[{\RomanNumeralCaps{8}, Cor.~$7.7$}]{MR0217087} provides an equivalence between the category of quasi-projective complex schemes with a real structure and the category of quasi-projective real schemes, where $\R$-isomorphic quasi-projective real schemes correspond to equivalent real structures.

Let $X$ be a quasi-projective $\C$-scheme and fix a real structure $\rho$ on $X$. We then write $(X, \rho)$ for the corresponding $\R$-scheme; we only omit the explicit mention of the real structure when it is clear that we mean the canonical real structure. Denote by $Z^1(\Gal(\C/\R),\Aut_{\C}(X))$ the set consisting of automorphisms $\tau \in \Aut_{\C}(X)$ satisfying $(\tau\circ\rho)^2=\id$ and define the first Galois cohomology pointed set by $H^1(\Gal(\C/\R),\Aut_{\C}(X))=Z^1(\Gal(\C/\R),\Aut_{\C}(X))/\sim$, where $\tau_1 \sim \tau_2$ if and only if there exists $\nu \in \Aut_{\C}(X)$ with $\nu\circ\tau_1=\tau_2\circ\rho\circ\nu\circ\rho$. There exists a one-to-one correspondence between the elements of this Galois cohomology set and real forms:

\begin{theorem}[\cite{MR181643}] \label{thm: Borel and Serre Galois cohomology}
	Let $X$ be a quasi-projective complex scheme and $\rho$ a real structure on $X$. The isomorphism classes of real forms of $X$ are in bijection with the elements of $H^1(\Gal(\C/\R),\Aut_{\C}(X))$, where the nontrivial element of $\Gal(\C/\R)$ acts on $\Aut_{\C}(X)$ by conjugation with $\rho$.
\end{theorem}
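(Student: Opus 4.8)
The plan is to reduce the statement to the purely formal translation between equivalence classes of real structures on $X$ and nonabelian cocycles, and then to invoke the dictionary between real structures and real forms recalled above. First I would use the equivalence of categories between quasi-projective complex schemes with a real structure and quasi-projective real schemes (recalled in this section, cf.~\cite{MR0217087}) to replace the object of study: isomorphism classes of real forms of $X$ correspond bijectively to equivalence classes of real structures on $X$, where a real form $(X_0,\varphi)$ produces the real structure $\varphi\circ\rho_0\circ\varphi^{-1}$ and a real structure $\rho'$ produces the real form $X/\langle \rho'\rangle$. So it suffices to match equivalence classes of real structures with $H^1(\Gal(\C/\R),\Aut_\C(X))$, taking as base point the fixed structure $\rho$.

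The key construction is the map sending a real structure $\rho'$ to $\tau := \rho'\circ\rho \in \Aut_\C(X)$. Since $\rho'$ and $\rho$ are both anti-regular, their composition is regular, so $\tau$ is indeed a $\C$-automorphism; conversely $\rho'=\tau\circ\rho$ recovers $\rho'$ from $\tau$, so the assignment is injective. I would then check that $\rho'$ being an involution is equivalent to the cocycle identity: $\rho'\circ\rho'=\id$ unwinds to $\tau\circ\rho\circ\tau\circ\rho=\id$, which is exactly the condition defining $Z^1(\Gal(\C/\R),\Aut_\C(X))$ for the conjugation action $\tau\mapsto \rho\circ\tau\circ\rho$. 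For surjectivity, given any $\tau\in Z^1$ I set $\rho':=\tau\circ\rho$, which is anti-regular (a regular map composed with an anti-regular one) and squares to the identity by the cocycle condition, hence is a genuine real structure. This establishes a bijection between real structures on $X$ and $Z^1$.

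Next I would verify that this bijection descends to equivalence classes. Two real structures $\rho_1=\tau_1\circ\rho$ and $\rho_2=\tau_2\circ\rho$ are equivalent precisely when there is $\theta\in\Aut_\C(X)$ with $\theta\circ\rho_1=\rho_2\circ\theta$; substituting and cancelling $\rho$ on the right turns this into $\theta\circ\tau_1=\tau_2\circ\rho\circ\theta\circ\rho$, which is exactly the cohomology relation with $\nu=\theta$. Hence equivalence classes of real structures are in bijection with $H^1(\Gal(\C/\R),\Aut_\C(X))$, and composing with the first step proves the theorem.

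Finally, a word on where the real work sits. The cocycle bookkeeping in the last two paragraphs is a direct and essentially formal calculation; the substantive input is the first step, namely that every real structure actually arises from a real form and vice versa. This is where quasi-projectivity is indispensable: it guarantees the existence of the geometric quotient $X/\langle\rho'\rangle$ as a scheme and that its complexification recovers $X$ (Galois descent along $\C/\R$), so that \emph{real structure up to equivalence} and \emph{real form up to isomorphism} genuinely coincide. I expect this descent step to be the main obstacle in a fully self-contained treatment, but here it is supplied by the cited equivalence of categories, leaving only the cohomological translation to carry out.
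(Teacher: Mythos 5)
Your proof is correct. Note that there is no proof in the paper to compare against: Theorem~\ref{thm: Borel and Serre Galois cohomology} is quoted as a classical result (essentially due to Borel and Serre), with the dictionary between real forms and real structures recalled without proof in Section~\ref{section: preliminaries}. Your argument is the standard one and fits exactly the framework the paper sets up: the correspondence between real forms up to $\R$-isomorphism and real structures up to equivalence carries all the geometric content (this is where quasi-projectivity is genuinely needed, for the existence of the quotient $X/\langle\rho'\rangle$ and for descent along $\C/\R$), while your computations --- $\rho'\mapsto\tau=\rho'\circ\rho$ identifying involutivity with the cocycle condition $\tau\circ\rho\circ\tau\circ\rho=\id$, and equivalence of structures with the relation $\nu\circ\tau_1=\tau_2\circ\rho\circ\nu\circ\rho$ via $\nu=\theta$ --- are precisely the formal translation into the paper's definition of $H^1(\Gal(\C/\R),\Aut_{\C}(X))$, carried out correctly and with matching base points.
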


Some real forms can be distinguished by a topological analysis: Given an $\R$-scheme $(X,\rho)$, we write $X(\R)=X(\C)^\rho$ for the $\C$-rational points which are fixed by $\rho$ and call it the \textit{real locus} of $(X, \rho)$. When $(X, \rho)$ is a real algebraic variety, the real locus can be endowed with a Euclidean topology as a subspace of some $\R^n$, so that when $(X, \rho)$ is smooth, $X(\R)$ becomes a differentiable manifold. If $(X, \rho)$ and $(\widetilde{X}, \widetilde{\rho})$ are both smooth real algebraic varieties, we say that $(X, \rho)$ and $(\widetilde{X}, \widetilde{\rho})$ are \textit{birationally diffeomorphic} if there exists a real birational map $(X, \rho) \overset{\sim}{\dashrightarrow} (\widetilde{X}, \widetilde{\rho})$ defined at every point of $X(\R)$ and restricting to a diffeomorphism from $X(\R)$ to $\tilde{X}(\R)$.

\begin{example}
	Along the lines of for example \cite[Prop.~1.1]{MR1954070} or \cite[Thm.~4.1]{benzerga}, one can show that if $n$ is even, there is, up to equivalence, only one real structure on $\PP^n_{\C}$, namely $\rho_{\PP^n_{\C}}\colon [ z_0 : \ldots : z_n ] \mapsto [ \overline{z_0} : \ldots : \overline{z_n} ]$, which we also call the standard or classical real structure on $\PP^n_{\C}$. If, however, $n=2k+1$ is odd, there are two, up to equivalence: the standard one and $[z_0 : z_1 : \ldots : z_n] \mapsto [-\overline{z_1} : \overline{z_0}: \ldots : -\overline{z_{2k+1}} : \overline{z_{2k}} ]$. 
	
	It is much harder to determine the equivalence classes of real structures on affine $n$-spaces: On the affine line, there is only one equivalence class containing the complex conjugation, and due to Kambayashi \cite{MR369380}, the real structures on $\A^2_{\C}$ are all equivalent to $(x_1,x_2) \mapsto (\overline{x_1}, \overline{x_2})$. However, already for $\A^n_{\C}$ with $n\geq 3$ it is not known if there is more than the classical real form.
\end{example}

\subsection{Small dimensions}

With Theorem~\ref{thm: Borel and Serre Galois cohomology}, we can see why the possible number of real forms jumps from finitely many in the case of curves to at most countably many in the case of projective rational surfaces. 

\begin{proposition} \label{prop: curves finitely many real forms}
	Any smooth complex quasi-projective curve $C$ has finitely many real forms up to isomorphism.
\end{proposition}
\begin{proof}
	Any smooth quasi-projective curve is either projective or affine. Consider first a smooth complex projective curve $C$ and denote by $g(C)$ its genus. If $g(C)=0$, then $C \cong \PP^1_{\C}$, which, by for example Proposition~$1.1$ in Russo~\cite{MR1954070}, has precisely two real structures up to equivalence, implying that the first Galois cohomology set contains only two elements. If $g(C)=1$, then $C$ is an abelian variety, for which by~\cite{MR181643, MR678890} we find that $H^1(\Gal(\C/\R), \Aut_{\C}(C))$ is finite. Now, for $g(C) \geq 2$, by the Riemann-Hurwitz formula, Natanzon~\cite{MR509374} or Bujalance, Gromadzki and Singerman~\cite{MR1165047}, the automorphism group $\Aut_{\C}(C)$ is finite, and thus $H^1(\Gal(\C/\R), \Aut_{\C}(C))$ is, too. This proves the claim for the projective case.
	
	Let $C$ be a smooth complex affine curve embedded in some $\A^n_{\C}$. Take an embedding $\A^n_{\C} \hookrightarrow \PP^n_{\C}$ and write $\widetilde{C}$ for the normalisation of the projective closure of $C$ in $\PP^n_{\C}$. Denote by $j\colon C \hookrightarrow \widetilde{C}$ the open embedding. Any automorphism $f\colon C\overset{\sim}{\rightarrow} C$ induces a birational map $j\circ f \circ j^{-1}$ on $\widetilde{C}$, with inverse $j\circ f^{-1} \circ j^{-1}$. Since $\widetilde{C}$ is smooth and projective, $j\circ f \circ j^{-1}$ is actually an automorphism. Thus, $\Aut_{\C}(C)  \subseteq \Aut_{\C}(\widetilde{C})$. Moreover, since $j\circ f \circ j^{-1}$ restricts to an automorphism on $C$, it leaves the finite set $\widetilde{C} \smallsetminus C$ invariant.
	
	If $g(\widetilde{C})\geq 2$, the finiteness of $H^1(\Gal(\C/\R), \Aut_{\C}(C))$ follows from the finiteness of $\Aut_{\C}(\widetilde{C})$. When $g(\widetilde{C})=1$, we can specify a point on $\widetilde{C}$, turning $\widetilde{C}$ into an elliptic curve. Then any automorphism of $\widetilde{C}$ is of the form $z \mapsto az+b$ for $a\in \C^*$ and $b\in C$, and leaves the set $\widetilde{C} \smallsetminus C$ invariant. There can only be finitely many such automorphisms, proving that $\Aut_{\C}(C)$ and thus $H^1(\Gal(\C/\R), \Aut_{\C}(C))$ are finite. The last case is $g(\widetilde{C})=0$: then $\widetilde{C} \cong \PP^1_{\C}$ and there are only finitely many automorphisms leaving a subset of at least three points invariant. Thus, we only have to consider $C\cong \A^1_{\C}$ or $C\cong \A^1_{\C} \smallsetminus \{0\}$. The affine line has only one real form up to isomorphism, which can be seen by a direct calculation, and $\A^1_{\C} \smallsetminus \{0\}$ has three, see Proposition~$4.2$ of~\cite{MR4609022}. This completes the proof. 
\end{proof}

To describe the situation for projective rational surfaces, note that a direct consequence of Theorem~\ref{thm: Borel and Serre Galois cohomology} is that if $\Aut_{\C}(X)$ is countable, then so must be the set isomorphism classes of real forms of $X$. 

\begin{proposition} \label{prop: at most countably for projective}
	Let $X$ be a smooth complex rational projective surface. Then $X$ has at most countably many nonisomorphic real forms.
\end{proposition}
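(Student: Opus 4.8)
The plan is to apply Theorem~\ref{thm: Borel and Serre Galois cohomology} and reduce the statement to a countability assertion about Galois cohomology. If $X$ carries no real structure at all, then it has no real form and the statement is vacuous, so we may fix one real structure $\rho_0$ on $X$. By Theorem~\ref{thm: Borel and Serre Galois cohomology} it then suffices to show that the pointed set $H^1(\Gal(\C/\R), \Aut_{\C}(X))$ is countable, where $\Gal(\C/\R)$ acts on $\Aut_{\C}(X)$ by conjugation with $\rho_0$. Since $\Aut_{\C}(X)$ is typically uncountable (already for $X = \PP^2_{\C}$), the remark that countability of $\Aut_{\C}(X)$ forces countably many forms does not apply directly, and the argument must exploit the algebraic structure of the automorphism group.

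Write $G = \Aut_{\C}(X)$ and let $G^0 = \Aut^0(X)(\C)$ be its identity component. The first input is structural: as $X$ is rational we have $h^1(X,\mathcal{O}_X)=0$, so the Albanese variety of $X$ is trivial and, by a theorem of Matsumura, $G^0$ is a connected \emph{linear} algebraic group. Conjugation by $\rho_0$ preserves $G^0$ and equips it with the structure of a linear algebraic group over $\R$. The second input is that the component group $\pi_0 = G/G^0$ is countable. Indeed, because $X$ is rational, $\Pic(X)$ agrees with the Néron--Severi group and is free of finite rank $n$; the kernel $N$ of the action $G \to \GL(\Pic(X)) \cong \GL_n(\Z)$ consists of automorphisms fixing the class of an ample line bundle $L$, hence is contained in $\Aut(X,L)$, which is of finite type (realized as a closed subgroup of some $\GL_{N}$ via an embedding by $|L^{\otimes m}|$, using $\Pic^0(X)=0$). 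Thus $N$ has finitely many connected components, so $N/G^0$ is finite, while $G/N$ embeds into the countable group $\GL_n(\Z)$; hence $\pi_0$ is countable.

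With these two facts, consider the exact sequence of $\Gal(\C/\R)$-groups $1 \to G^0 \to G \to \pi_0 \to 1$ and the associated exact sequence of pointed cohomology sets $H^1(\Gal(\C/\R), G^0) \to H^1(\Gal(\C/\R), G) \xrightarrow{\pi_*} H^1(\Gal(\C/\R), \pi_0)$. Since $\Gal(\C/\R) \cong \Z/2$, a cocycle with values in $\pi_0$ is determined by its value on complex conjugation, so $H^1(\Gal(\C/\R), \pi_0)$ is a quotient of a subset of the countable group $\pi_0$ and is therefore countable. It remains to bound the fibres of $\pi_*$. By the standard twisting formalism for nonabelian cohomology (Serre), each nonempty fibre over a class $[c]$ is the image of $H^1(\Gal(\C/\R), {}_cG^0)$, where ${}_cG^0$ is the corresponding twist of $G^0$; every such twist is again a linear algebraic group over $\R$, so by the finiteness theorem of Borel and Serre $H^1(\Gal(\C/\R), {}_cG^0)$ is finite. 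Hence each fibre of $\pi_*$ is finite and the base is countable, whence $H^1(\Gal(\C/\R), G)$ is countable, as required.

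The main obstacle is the second structural input together with its cohomological consequence: one must control the component group $\pi_0$ and, more delicately, ensure that the fibres of $\pi_*$ stay finite even after twisting. This hinges on the fact that $G^0$ is linear (so that all its $\R$-forms are linear algebraic groups) and that Borel--Serre finiteness applies uniformly to every twist; the countability of the base is then automatic from $\Gal(\C/\R)$ being finite and $\pi_0$ countable. The linearity of $\Aut^0(X)$ and the finiteness of the number of components of the polarization-preserving subgroup are exactly the places where rationality (via $q(X)=0$ and $\Pic^0(X)=0$) enters decisively.
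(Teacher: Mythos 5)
Your proof is correct, and it takes a genuinely different route from the paper's. Both arguments start from the same exact sequence $1 \to \Aut_{\C}^{\#}(X) \to \Aut_{\C}(X) \to \Aut_{\C}^{\ast}(X) \to 1$ induced by the action on $\Pic(X)$ (your $N$ is the paper's $\Aut_{\C}^{\#}(X)$), but they diverge immediately. The paper invokes Harbourne's dichotomy \cite{MR875372} --- at most one of kernel and image is infinite --- and splits into two cases: if $\Aut_{\C}^{\ast}(X)$ is infinite, the kernel is finite, so $\Aut_{\C}(X)$ is countable and Theorem~\ref{thm: Borel and Serre Galois cohomology} concludes directly; if $\Aut_{\C}^{\ast}(X)$ is finite, every automorphism has zero entropy and Benzerga's theorem \cite{MR3473660} even gives \emph{finitely} many real forms. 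You instead exploit the algebraic-group structure of $\Aut_{\C}(X)$: linearity of $\Aut^0(X)$ (Matsumura plus $q(X)=0$), countability of the component group (the kernel of the $\Pic$-action lies in a polarized, hence finite-type, automorphism group, while the image lies in $\GL_n(\Z)$), and then the nonabelian cohomology sequence with Serre's twisting formalism and Borel--Serre finiteness of $H^1(\R,\cdot)$ for linear algebraic groups, giving a countable base and finite fibres. One step you state somewhat glibly deserves a line of justification: a cocycle $\tau$ relative to $\rho_0$ satisfies $\tau\rho_0\tau\rho_0=\id$, so $\tau\rho_0$ is itself a real structure on $X$, and the twisted Galois action on $G^0$ is precisely conjugation by $\tau\rho_0$; this is exactly why each twist ${}_{c}G^0$ is a genuine real form of the linear algebraic group $\Aut^0(X)$, so that Borel--Serre applies uniformly. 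As for what each approach buys: the paper's proof is shorter given the two cited surface-specific results and yields finiteness in one branch; yours avoids both Harbourne and Benzerga, relies only on general machinery, and hence generalizes essentially verbatim beyond rational surfaces --- it is in spirit the field-theoretic/cohomological argument for arbitrary smooth projective varieties that the paper attributes to \cite{labinet}.
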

\begin{proof}
	Since $X$ is a smooth complex rational projective surface, it is either a blow-up of $\PP^2_{\C}$ or of a Hirzebruch surface $\F_r$ with $r=0$ or $r\geq 2$. Thus, the Picard group of $X$ is some direct sum $\Pic(X) \cong \Z^n$ generated by the classes of the exceptional curves arising from the blow-ups involved, and the total transform of the generators of $\Pic(\PP^2_{\C})$ or $\Pic(\F_r)$, respectively. Therefore, $\Aut(\Pic(X))$ is a subgroup of some $\GL_n(\Z)$. Consider the action of $\Aut_{\C}(X)$ on the Picard group $\Pic(X)$ and denote the image of $\Aut_{\C}(X)$ in $\Aut(\Pic(X))$ by $\Aut_{\C}^{\ast}(X)$. This gives the short exact sequence $1 \rightarrow \Aut_{\C}^{\#}(X)\rightarrow \Aut_{\C}(X) \rightarrow \Aut_{\C}^{\ast}(X) \rightarrow 1$, where $\Aut_{\C}^{\#}(X)$ is the kernel of $\Aut_{\C}(X) \rightarrow \Aut_{\C}^{\ast}(X)$. 
	
	Thanks to Harbourne~\cite[{Cor.~(1.4)}]{MR875372}, at most one of $\Aut_{\C}^{\ast}(X)$ and $\Aut_{\C}^{\#}(X)$ can be infinite. As a subgroup of $\Aut(\easy{Pic}(X))$, the group $\Aut_{\C}^{\ast}(X)$ is always countable; if it is infinite, then $\Aut_{\C}^{\#}(X)$ is finite, and thus $\Aut_{\C}(X)$ is countable. Therefore, $H^1(\Gal(\mathbb{C}/\mathbb{R}), \Aut_{\C}(X))$ is countable, too. If $\Aut_{\C}^{\ast}(X)$ is finite, then the action of any automorphism of $X$ on $\Pic(X)$ is necessarily of finite order. But that means that the logarithm of the spectral radius of the induced map on $\Pic(X) \otimes_{\Z}\R$ has to be $0$, meaning any automorphism of $X$ has zero entropy. In this situation, a result by Benzerga~\cite[{Theorem~$2.5$}]{MR3473660} implies that $X$ has only finitely many real forms. This proves the claim. 
\end{proof}

The general statement of any smooth complex projective variety having at most countably many pairwise nonisomorphic real forms can be found in~\cite{labinet}. Therefore, rational affine surfaces are the first instance where we may have uncountably many pairwise nonisomorphic real forms.

\section{The moduli space} \label{section: moduli}

\subsection{Construction of moduli} \label{subsec: construction}

To describe the moduli which parametrises the real forms, we construct an auxiliary scheme. Let $C=\Spec(\C[\alpha^{\pm 1}, (\alpha-1)^{-1}])$ and set $\A^4_{C\times_{\C}C}=\Spec(\C[\alpha^{\pm 1}, (\alpha-1)^{-1}, \beta^{\pm 1}, (\beta -1)^{-1}][x,y,u,v])$. Consider the closed subscheme $\widetilde{\mathcal{S}}$ of $\A^4_{C\times_{\C}C}$ given by the ideal generated by the polynomials
\begin{align} \label{align: equations of affine surface}
		&yu - x(x-1)(x-\alpha), \nonumber \\
		&xv - u(u-1)(u-\beta), \\
		&yv - (x-1)(x-\alpha)(u-1)(u-\beta). \nonumber
\end{align}
Write $\widetilde{q}\colon \widetilde{\mathcal{S}} \rightarrow C\times_{\C} C$ for the restriction of $\mathrm{pr}_{C\times_{\C}C}\colon \A^4_{C\times_{\C}C} \rightarrow C\times_{\C} C$ to $\widetilde{S}$. For any closed point $(\alpha, \beta) \in (\C \smallsetminus \{0,1\})^2$ of $C\times_{\C} C$, the fibre $\widetilde{q}^{-1}(\alpha, \beta)$ is isomorphic to a surface in $\A^4_{\C}$ given by the equations in~\eqref{align: equations of affine surface} with the fixed $\alpha, \beta$; we denote both the fibre and this isomorphic surface by $S_{\alpha, \beta}$. 

\begin{remark} \label{remark: S_alpha,beta iso to S_beta,alpha}
	We have an isomorphism $S_{\alpha, \beta} \overset{\sim}{\rightarrow} S_{\beta, \alpha}, (x,y,u,v) \mapsto (u,v,x,y)$ induced by the involution on $C\times_{\C}C$ exchanging the two factors.
\end{remark}
	
Denote by $\mathcal{S} \subseteq \A^4_{C}$ the smooth affine relative surface obtained by restricting the relative surface $\widetilde{q} \colon \widetilde{\mathcal{S}} \rightarrow C \times_{\C} C$ over the diagonal. Consider the involution $\iota \colon \mathcal{S} \rightarrow \mathcal{S}, (\alpha, x,y,u,v) \mapsto (\alpha, u,v,x,y)$ and the restriction $(\sigma_C \times \sigma_{\A^4_{\C}})|_{\mathcal{S}}\colon \mathcal{S} \rightarrow \mathcal{S}, (\alpha, x,y,u,v) \mapsto (\overline{\alpha}, \overline{x}, \overline{y}, \overline{u}, \overline{v})$ of the product real structure on $\A^4_{C}=C \times_{\C} \A^4_{\C}$. Their composition defines a real structure $\sigma$ on $\mathcal{S}$, resulting in the morphism $q \colon (\mathcal{S}, \sigma) \rightarrow (C, \sigma_C)$. Any fibre of $q$ over real points of $(C, \sigma_C)$ is isomorphic to the smooth surface $S_{\alpha}=S_{\alpha, \alpha} \subseteq \A^4_{\C}$, with real structure $\sigma_{\alpha}$.
	
\begin{definition} \label{def: the main relative real variety}
	Define $p \colon \mathcal{X} \rightarrow \A^1_{\R} \smallsetminus \{0,1\}$ as the relative real affine variety corresponding to $q \colon (\mathcal{S}, \sigma) \rightarrow (C, \sigma_C)$ under the equivalence of categories in~\cite[{\RomanNumeralCaps{8}, Cor.~$7.7$}]{MR0217087}.
\end{definition}

The surface $S_{\alpha, \beta}$ is isomorphic to two other surfaces. Both involve the blow-up of the affine plane, where in the first instance, we blow up only points lying in $\A^2_{\C}$ and not infinitely close ones.

\subsection{First embeddings of the surfaces $S_{\alpha, \beta}$} \label{subsec: first}

In this section we describe one of the two ways of embedding $S_{\alpha, \beta}$ in the blow-up of the affine plane. 

Fix $\alpha, \beta \in \C \smallsetminus \{0,1\}$. Blow up the complex affine plane in the five points $(0,0)$, $(1,0)$, $(\alpha, 0)$, $(0,1)$ and $(0,\beta)$, and call the exceptional curves above them $E(0,0)$, $E(1,0)$, $E(\alpha,0)$, $E(0,1)$ and $E(0,\beta)$, respectively. Denote this blow-up by $\pi_{\alpha, \beta}\colon V_{\alpha, \beta} \rightarrow \A^2_{\C}$. Define $W_{\alpha, \beta}$ as the complex rational surface one obtains as the complement of the strict transforms of the lines $L_{x}=\{x=0\}$ and $L_{y}=\{y=0\}$ in the blow-up, that is $W_{\alpha, \beta} = V_{\alpha, \beta} \smallsetminus (L_x \cup L_y)$. Let $\iota_{\A^2_{\C}} \colon \A^2_{\C} \rightarrow \A^2_{\C}, (x,y) \mapsto (y,x)$ and let $\rho=\iota_{\A^2_{\C}} \circ \rho_{\A^2_{\C}}$, which defines a real structure on $\A^2_{\C}$. Whenever $\alpha \in \R\smallsetminus \{0,1\}$, since $L_x$ and $L_y$ are exchanged by $\rho$, we can endow $W_{\alpha}=W_{\alpha, \alpha}$ with the real structure $\rho_{\alpha}$ obtained as the restriction of the lift of $\rho$, see Remark~\ref{remark: blow-up}. 

Choose the standard open embedding $\A^2_{\C} \hookrightarrow \PP^2_{\C}, (x,y) \mapsto [x:y:1]$. Denote by $L_z=\{z=0\}$ the line at infinity. Then $W_{\alpha, \beta}=Y_{\alpha, \beta} \smallsetminus (L_x \cup L_y \cup L_z)$. Compare with Figure~\ref{figure: W_alpha,beta}. If $\alpha \in \R \smallsetminus \{0,1\}$, then the real structure $\iota_{\PP^2_{\C}} \circ \rho_{\PP^2_{\C}}$ lifts to a real structure $\widetilde{\rho}_{\alpha}$ on $Y_{\alpha}=Y_{\alpha, \alpha}$ and restricts to $\rho_{\alpha}$ on $W_{\alpha}$.  

\begin{figure}[h]
	\centering
	\scalebox{1}{\begin{tikzpicture}
			\draw (0.1,0) -- (2.9,0);
			\draw (0.2,0.2) -- (1.6,-2.3);
			\draw (2.8,0.2) -- (1.4,-2.3);
			\fill[green!70!blue] (1.5, -2.123) circle[radius=1.5pt];
			\fill[red] (1.1, -1.4) circle[radius=1.5pt];
			\fill[red] (0.7, -0.7) circle[radius=1.5pt];
			\fill[blue] (1.9, -1.4) circle[radius=1.5pt];
			\fill[blue] (2.3, -0.7) circle[radius=1.5pt];
			\node at (1.5,0.3) {\tiny $L_z$};
			\node at (0.2,0.4) {\tiny $L_{y}$};
			\node at (2.8,0.4) {\tiny $L_{x}$};
			\node at (0.04,-0.7) {\tiny $(1,0)$};
			\node at (0.4,-1.45) {\tiny $(\alpha,0)$};
			\node at (2.95,-0.7) {\tiny $(0,1)$};
			\node at (2.6,-1.45) {\tiny $(0,\beta)$};
			\node at (1.5,-2.55) {\tiny $(0,0)$};
			
			\node at (1.55, -3.2) {\small $\PP^2_{\C}$};
			
			\draw[line width=1pt] (6.3,0) -- (9.3,0);
			\draw[line width=1pt] (6.5,0.2) -- (6.5,-2.3);
			\draw[line width=1pt] (9.1,0.2) -- (9.1,-2.3);
			\draw[green!70!blue] (6.3,-2.1) -- (9.3,-2.1);
			\draw[red] (6.3,-1.4) -- (7.3,-1.4);
			\draw[red] (6.3,-0.7) -- (7.3,-0.7);
			\draw[blue] (8.3,-1.4) -- (9.3,-1.4);
			\draw[blue] (8.3,-0.7) -- (9.3,-0.7);
			
			\node at (7.8,0.3) {\tiny $L_z$};
			\node at (6.45,0.4) {\tiny $L_{y}$};
			\node at (9.1,0.4) {\tiny $L_{x}$};
			\node at (7.8,-2.4) {\tiny $E(0,0)$};
			\node at (5.7,-1.4) {\tiny $E(\alpha,0)$};
			\node at (5.75,-0.7) {\tiny $E(1,0)$};
			\node at (9.9,-1.4) {\tiny $E(0,\beta)$};
			\node at (9.9,-0.7) {\tiny $E(0,1)$};
			
			\node at (7.8, -3.2) {\small $Y_{\alpha, \beta}$};
			
			\draw[<-] (4,-1) -- (5,-1);			
	\end{tikzpicture}}
	\caption{Construction of $Y_{\alpha, \beta}$ and $W_{\alpha, \beta}$.} \label{figure: W_alpha,beta}
\end{figure}
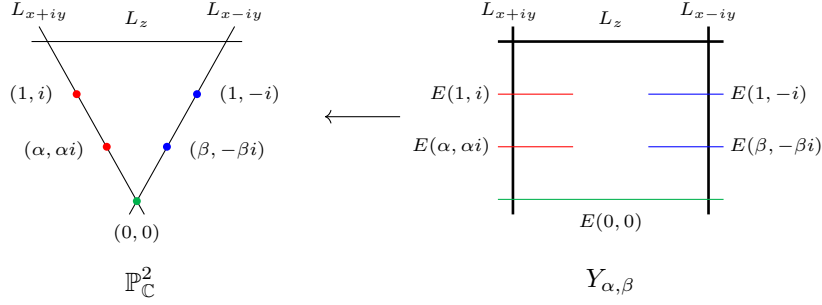

Note that on $Y_{\alpha, \beta}$, the curves $L_{y}$, $L_z$ and $L_{x}$ --- drawn with thick lines in Figure~\ref{figure: W_alpha,beta} --- have self-intersections $-2$, $1$ and $-2$, respectively. As they successively intersect in one point only, the chain $B = L_{y} \triangleright L_z \triangleright L_{x}$ is called a \textit{zigzag}, a term coined and studied extensively by Gizatullin and Danilov~\cite{MR0286791, MR0376701, MR0437545}, or more recently by Flenner, Kaliman and Zaidenberg~\cite{MR2426134}, or Blanc and Dubouloz~\cite{MR2817414, MR3340326}. As will be proved in Lemma~\ref{lemma: W_alpha,beta iso to S_alpha,beta} below, the surface $W_{\alpha, \beta}=Y_{\alpha, \beta} \smallsetminus B$ is affine and since it is also completable by a chain of smooth rational curves, $W_{\alpha, \beta}$ is called a \textit{Gizatullin surface}.

\subsection{Second  embeddings of the surfaces $S_{\alpha, \beta}$} \label{subsec: second}
The second incarnation of $S_{\alpha, \beta}$ is also given as the complement of a zigzag and the construction is very similar to the one in Section~\ref{subsec: first}, while also blowing up some points on exceptional curves. A key advantage of this construction will become apparent in the proof of Proposition~\ref{prop: all W_alpha,beta are isomorphic}. 

Fix $\alpha, \beta \in \C \smallsetminus \{0,1\}$. First, blow up the three points $(1,0), (\alpha,0), (0,0)$ in $\A^2_{\C}$, and call the exceptional curves above them $E(1,0)$, $E(\alpha, 0)$ and $E(0,0)$, respectively. Then, on $E(0,0)$, blow up the two points corresponding to the tangent directions $x-y=0$ and $x-\beta y=0$ and denote the exceptional curves by $A_1$ and $A_{\beta}$; compare with Figure~\ref{figure: X_alpha,beta}. Denote the resulting blow-up by $\gamma_{\alpha, \beta}\colon Z_{\alpha, \beta} \rightarrow \A^2_{\C}$. Call $X_{\alpha,\beta}$ the surface that results by removing $L_y$ and $E(0,0)$ from $Z_{\alpha, \beta}$.
	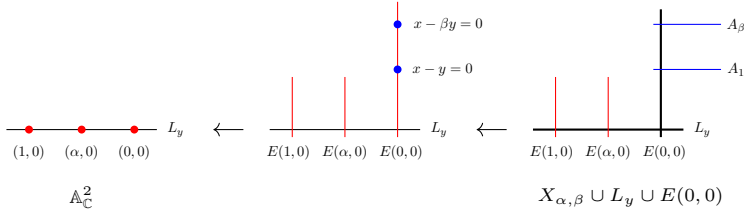
\begin{figure}[h]
		\centering
		\scalebox{1}{\begin{tikzpicture}
				\draw (0,0) -- (2,0);
				\fill[red] (0.3,0) circle[radius=1.5pt];
				\fill[red] (1,0) circle[radius=1.5pt];
				\fill[red] (1.7,0) circle[radius=1.5pt];
				\node at (2.25,0) {\scalebox{0.5}{$L_y$}};
				\node at (0.3,-0.3) {\scalebox{0.5}{$(1,0)$}};
				\node at (1,-0.3) {\scalebox{0.5}{$(\alpha,0)$}};
				\node at (1.7,-0.3) {\scalebox{0.5}{$(0,0)$}};
				
				
				\draw[<-] (2.75,0) -- (3.15,0);
				
				\draw (3.5,0) -- (5.5,0);
				\node at (5.75,0) {\scalebox{0.5}{$L_y$}};
				\draw[red] (5.2,-0.1) -- (5.2,1.7);
				\draw[red] (4.5,-0.1) -- (4.5,0.7);
				\draw[red] (3.8,-0.1) -- (3.8,0.7);
				\fill[blue] (5.2,0.8) circle[radius=1.5pt];
				\fill[blue] (5.2,1.4) circle[radius=1.5pt];
				\node at (5.8, 0.8) {\scalebox{0.5}{$x-y=0$}};
				\node at (5.875, 1.4) {\scalebox{0.5}{$x-\beta y=0$}};
				\node at (5.25,-0.3) {\scalebox{0.5}{$E(0,0)$}};
				\node at (4.5,-0.3) {\scalebox{0.5}{$E(\alpha,0)$}};
				\node at (3.75,-0.3) {\scalebox{0.5}{$E(1,0)$}};
				
				\draw[<-] (6.25,0) -- (6.65,0);
				
				\draw[thick] (7,0) -- (9,0);
				\node at (9.25,0) {\scalebox{0.5}{$L_y$}};
				\draw[thick] (8.7,-0.1) -- (8.7,1.6);
				\draw[red] (8,-0.1) -- (8,0.7);
				\draw[red] (7.3,-0.1) -- (7.3,0.7);
				\draw[blue] (8.6,0.8) -- (9.5,0.8);
				\draw[blue] (8.6,1.4) -- (9.5,1.4);
				\node at (8.75,-0.3) {\scalebox{0.5}{$E(0,0)$}};
				\node at (8,-0.3) {\scalebox{0.5}{$E(\alpha,0)$}};
				\node at (7.25,-0.3) {\scalebox{0.5}{$E(1,0)$}};
				\node at (9.7,0.8) {\scalebox{0.5}{$A_1$}};
				\node at (9.7,1.375) {\scalebox{0.5}{$A_{\beta}$}};
		\end{tikzpicture}}
		\caption{Construction of the affine rational surface $X_{\alpha, \beta}$.} \label{figure: X_alpha,beta}
	\end{figure}

\subsection{Isomorphisms with the closed fibres}

We now prove that the surfaces $W_{\alpha, \beta}$ and $X_{\alpha, \beta}$ from Sections~\ref{subsec: first} and~\ref{subsec: second} are isomorphic to the closed fibres $S_{\alpha, \beta}$ defined in~\ref{subsec: construction}. We keep the same notation for $\pi_{\alpha, \beta}\colon V_{\alpha, \beta} \rightarrow \A^2_{\C}$ and $\gamma_{\alpha, \beta}\colon Z_{\alpha, \beta} \rightarrow \A^2_{\C}$ as in Sections~\ref{subsec: first} and~\ref{subsec: second}.

\begin{lemma}\label{lemma: W_alpha,beta iso to S_alpha,beta}
	~
	\begin{enumerate}[leftmargin=*]
		\item For all $\alpha, \beta \in \C \smallsetminus \{0,1\}$, the morphism $\nu_{\alpha, \beta}=\mathrm{pr}_{x,u}\colon S_{\alpha, \beta} \rightarrow \A^2_{\C}, (x,y,u,v) \mapsto (x, u)$ is birational and factors as $\nu_{\alpha, \beta}=\pi_{\alpha, \beta}\circ j_{\alpha, \beta}$, where $j_{\alpha, \beta} \colon S_{\alpha, \beta} \hookrightarrow V_{\alpha, \beta}$ is an open embedding with image $W_{\alpha, \beta}$. \label{item: S_alpha,beta isomorphic to W_alpha,beta}
		\item For a fixed $\alpha \in \R \smallsetminus \{0,1\}$, the induced isomorphism $j_{\alpha}=j_{\alpha, \alpha} \colon (S_{\alpha}, \sigma_{\alpha}) \overset{\sim}{\rightarrow} (W_{\alpha}, \rho_{\alpha})$ is real. \label{item: real iso between S_alpha and W_alpha}
		\item For all $\alpha, \beta \in \C \smallsetminus \{0,1\}$, the morphism $\eta_{\alpha, \beta}=\mathrm{pr}_{x,y}\colon S_{\alpha, \beta} \rightarrow \A^2_{\C}, (x,y,u,v) \mapsto (x, \tfrac{1}{\alpha}y)$ is birational and factors as $\nu_{\alpha, \beta}=\gamma_{\alpha, \beta}\circ i_{\alpha, \beta}$, where $i_{\alpha, \beta} \colon S_{\alpha, \beta} \hookrightarrow Z_{\alpha, \beta}$ is an open embedding with image $X_{\alpha, \beta}$. \label{item: S_alpha,beta isomorphic to X_alpha,beta}
	\end{enumerate}
\end{lemma}
\begin{proof}
	We prove~\ref{item: S_alpha,beta isomorphic to W_alpha,beta}: The restriction of $\nu_{\alpha, \beta}$ to $S_{\alpha, \beta} \smallsetminus \{xu=0\}$ gives an isomorphism with $\A^2_{\C} \smallsetminus (L_{x} \cup L_{y})$: any point outside of $L_{x}$ and $L_{y}$ comes from a point $(x,y,u,v)$ with $x,u\neq 0$. For such points, we find using~\eqref{align: equations of affine surface} that $y=\tfrac{x(x-1)(x-\alpha)}{u}$ and $v=\tfrac{u(u-1)(u-\beta)}{x}$, and for such $y$ and $v$, we find
	\begin{align*}
		yv&=\frac{x(x-1)(x-\alpha)u(u-1)(u-\beta)}{ux} \\
		&= (x-1)(x-\alpha)(u-1)(u-\beta).
	\end{align*}
	Thus, for $x, u \neq 0$, we can find a unique point $(x,y,u,v) \in S_{\alpha, \beta}$, which shows the isomorphism $S_{\alpha, \beta} \smallsetminus \{xu=0\} \overset{\sim}{\rightarrow} \A^2_{\C}\smallsetminus (L_{x} \cup L_{y})$ and thus the birationality of $\nu_{\alpha, \beta}$.
	
	Now, $\nu_{\alpha, \beta}$ maps the principal divisor
	\begin{align*}
		\{x=0\} &\cong \Spec(\C[y,u,v]/(yu,u(u-1)(u-\beta), yv-\alpha(u-1)(u-\beta)))\\
		&\cong \{x=y=u-1=0\} \sqcup \{x=y=u-\beta=0\}~\\
		&\phantom{\cong} \phantom{\cong} \sqcup \{x=u=yv-\alpha\beta=0\}
	\end{align*}
	on $S_{\alpha, \beta}$ onto $(0,1)$, $(0,\beta)$ and $(0,0)$, compare with Equations~\eqref{align: equations of affine surface}. Similarly, 
	\begin{align*}
		\{u=0\} &\cong \Spec(\C[x,y,v]/(x(x-1)(x-\alpha),xv, yv-\beta(x-1)(x-\alpha)))\\
		&\cong \{u=x=yv-\alpha\beta\} \sqcup \{u=v=x-1=0\}~\\
		&\phantom{\cong} \phantom{\cong} \sqcup \{u=v=x-\alpha=0\}
	\end{align*}
	is sent onto the three points $(0,0)$, $(1,0)$ and $(\alpha,0)$. Scheme theoretically, the fibres of $\nu_{\alpha, \beta}$ over these five points are of codimension~$1$ in $S_{\alpha, \beta}$. Since $S_{\alpha, \beta}$ is smooth, these five fibres are Cartier divisors on $S_{\alpha, \beta}$. Therefore, the inverse image under $\nu_{\alpha, \beta}$ of the ideal sheaf of $\{(0,0), (1,0), (\alpha,0), (0,1), (0,\beta)\}$ is an invertible sheaf on $S_{\alpha, \beta}$, and we can apply the universal property of the blow-up, see for example~\cite[\RomanNumeralCaps{2}.7.14]{MR0463157}. Thus, there exists a unique morphism $j_{\alpha, \beta}\colon S_{\alpha, \beta} \rightarrow V_{\alpha, \beta}$ such that $\nu_{\alpha, \beta}=\pi_{\alpha, \beta}\circ j_{\alpha, \beta}$.
	
	It remains to be shown that $j_{\alpha, \beta}$ is an open immersion. Consider first the point $(0,0)$. Since $V_{\alpha, \beta}$ is the blow-up of $\A^2_{\C}$ at five points, including $(0,0)$, we can write it locally in a neighbourhood of $(0,0)$ as the subvariety $\{xs-ut=0\} \subseteq \A^2_{\C} \times_{\C} \PP^1_{\C}$. On this neighbourhood, $j_{\alpha, \beta}$ is given as $(x,y,u,v) \mapsto ((x,u), [x:u])$. Using the equations defining $S_{\alpha, \beta}$ in~\eqref{align: equations of affine surface}, we find on this chart
	\[[x:u]=[u(u-1)(u-\beta):v]=[y:(x-1)(x-\alpha)].\]
	Thus, the restriction of $j_{\alpha, \beta}$ to $\nu_{\alpha, \beta}^{-1}(0,0)=\{x=u=yv-\alpha\beta\}$ is the open immersion $\Spec(\C[y^{\pm 1}]) \hookrightarrow \{((0,0), [y:\alpha])\thinspace | \thinspace yv-\alpha\beta=0\}$ of $\A^1_{\C} \smallsetminus \{0\}$ in the exceptional divisor $E(0,0)$. It is thus the complement of $[0:1]$ and $[1:0]$ in $E(0,0)$, which is the intersection of $E(0,0)$ with the strict transform of $\{xu=0\}$. Therefore, $j_{\alpha, \beta}$ does not contract $\nu_{\alpha, \beta}^{-1}(0,0)$. Proceeding analogously for the remaining points $(1,0), (\alpha,0), (0,1), (0,\beta)$, one deduces that $j_{\alpha, \beta}$ does not contract any curve in $S_{\alpha, \beta}$.
	
	This implies that $j_{\alpha, \beta}\colon S_{\alpha, \beta} \rightarrow V_{\alpha, \beta}$ is a quasi-finite birational morphism with normal target. Thus, by the version of Zariski's main theorem in~\cite[{Theorem~$8.12.6$}]{MR217086}, indeed, $j_{\alpha, \beta}$ is an open immersion. Since $\nu_{\alpha, \beta}=\pi_{\alpha, \beta}\circ j_{\alpha, \beta}$, the image of $j_{\alpha, \beta}$ is equal to $W_{\alpha, \beta}$. This proves~\ref{item: S_alpha,beta isomorphic to W_alpha,beta}. Item~\ref{item: real iso between S_alpha and W_alpha} follows by construction of $j_{\alpha}=j_{\alpha, \alpha}$.
	
	As for~\ref{item: S_alpha,beta isomorphic to X_alpha,beta}, we essentially repeat the arguments made to prove~\ref{item: S_alpha,beta isomorphic to W_alpha,beta}, with the caveat that in order to obtain $X_{\alpha, \beta}$ we blow up two points which lie on the exceptional curve $E(0,0)$. The restriction of $\eta_{\alpha, \beta}$ induces an isomorphism between $S_{\alpha, \beta} \smallsetminus \{y=0\}$ and $\A^2_{\C} \smallsetminus \{y=0\}$. This is again seen using the equations in~\eqref{align: equations of affine surface}: For any $y\neq 0$ and a fixed $x \in \C$, we find unique $u=\tfrac{x(x-1)(x-\alpha)}{y}$ and $v=\tfrac{(x-1)(x-\alpha)(u-1)(u-\beta)}{y}$, which satisfy $xv=\tfrac{x(x-1)(x-\alpha)(u-1)(u-\beta)}{y}=u(u-1)(u-\beta)$. This shows that $\eta_{\alpha, \beta}$ is birational.
	
	Consider the principal divisor 
	\begin{align*} 
		\{y=0\} &\cong \{y=x=u-1=0\} \sqcup \{y=x=u-\beta=0\} \\
		& \phantom{\cong} \phantom{\cong} \sqcup \{y=x-1=v-u(u-1)(u-\beta)=0\} \\
		& \phantom{\cong} \phantom{\cong} \sqcup \{y=x-\alpha=\alpha v-u(u-1)(u-\beta)=0\}
	\end{align*}
	on $S_{\alpha, \beta}$. These components are mapped to $(0,0)$ (the first two), $(1,0)$ and $(\alpha,0)$. Scheme theoretically, the fibres above these three points are of codimension~$1$ and must be Cartier divisors by the smoothness of $S_{\alpha, \beta}$. Note that $\eta_{\alpha, \beta}^{-1}(0,0)$ consists of two connected components, and $\eta_{\alpha, \beta}^{-1}(1,0)$ and $\eta_{\alpha, \beta}^{-1}(\alpha, 0)$ are both irreducible. As before, $\eta_{\alpha, \beta}$ must have a unique lift $\widetilde{\eta}_{\alpha, \beta} \colon S_{\alpha, \beta} \rightarrow \widetilde{Z}_{\alpha, \beta}$ to the blow-up $\widetilde{\gamma}_{\alpha, \beta} \colon \widetilde{Z}_{\alpha, \beta} \rightarrow \A^2_{\C}$ of $\A^2_{\C}$ in $(0,0)$, $(1,0)$ and $(\alpha, 0)$. Call $p_1, p_{\beta} \in E(0,0) \smallsetminus L_y \subseteq \widetilde{Z}_{\alpha, \beta}$ the two points on which the connected components of $\eta_{\alpha, \beta}^{-1}(0,0)$ are contracted by $\widetilde{\eta}_{\alpha, \beta}$. Locally around $(0,0)$ the surface $\widetilde{Z}_{\alpha, \beta}$ is given by $\{ys-xt=0\} \subseteq \A^2_{\C} \times_{\C} \PP^1_{\C}$. Thus the map $\widetilde{\eta}_{\alpha, \beta}$ can be written on this neighbourhood as $(x,y,u,v) \mapsto ((x,\tfrac{1}{\alpha}y), [x:\tfrac{1}{\alpha}y])$ and using~\eqref{align: equations of affine surface}, we deduce
	\[[x:y]=[u:\tfrac{1}{\alpha}(x-1)(x-\alpha)].\]
	Thus, $p_1=((0,0),[1:1])$ and $p_\beta=((0,0),[\beta:1])$. Repeating the Cartierness argument, the inverse image of the ideal sheaf of the two points is invertible on $\widetilde{Z}_{\alpha, \beta}$. This way, we obtain a unique lift $i_{\alpha, \beta}\colon S_{\alpha, \beta} \rightarrow Z_{\alpha, \beta}$ such that $\eta_{\alpha, \beta}=\gamma_{\alpha, \beta} \circ i_{\alpha, \beta}$. Here, $\gamma_{\alpha, \beta}\colon Z_{\alpha, \beta} \rightarrow \A^2_{\C}$ is the composition of the blow-up $\widetilde{\pi}_{\alpha, \beta} \colon Z_{\alpha, \beta} \rightarrow \widetilde{Z}_{\alpha, \beta}$ of $p_1$ and $p_{\beta}$ with $\widetilde{\gamma}_{\alpha, \beta}\colon \widetilde{Z}_{\alpha, \beta} \rightarrow \A^2_{\C}$, the blow-up of $(0,0)$, $(1, 0)$ and $(\alpha, 0)$.
	
	In order to once more apply the version of Zariski's main theorem as in~\cite[{Theorem~$8.12.6$}]{MR217086}, we need to see that $i_{\alpha, \beta}$ restricts to open immersions into $\widetilde{\gamma}_{\alpha, \beta}^{-1}(1,0)$, $\widetilde{\gamma}_{\alpha, \beta}^{-1}(\alpha,0)$, $\widetilde{\pi}_{\alpha, \beta}^{-1}(p_1)$ and $\widetilde{\pi}_{\alpha, \beta}^{-1}(p_\beta)$. In the case of $\widetilde{\gamma}_{\alpha, \beta}^{-1}(1,0)$ and $\widetilde{\gamma}_{\alpha, \beta}^{-1}(\alpha,0)$, the argument is analogous to before when we discussed the restriction of $j_{\alpha, \beta}$ to $\nu_{\alpha, \beta}^{-1}(0,0)$. What remains to be analysed is the restriction of $i_{\alpha, \beta}$ to $\widetilde{\pi}_{\alpha, \beta}^{-1}(p_1)$ and $\widetilde{\pi}_{\alpha, \beta}^{-1}(p_\beta)$. We do the calculations for $p_1$; the ones for $p_{\beta}$ proceed analogously. 
	
	Locally around $(0,0)$, the surface $\widetilde{Z}_{\alpha, \beta}$ is given by $\{ys-xt=0\} \subseteq \A^2_{\C} \times_{\C}\PP^1_{\C}$. Blowing up the point $p_1=((0,0),[1,1])$, on the open chart $t=1$ --- which coincides with $x=0$ on $S_{\alpha, \beta}$ --- the surface $Z_{\alpha, \beta}$ is then locally given by $\{p(s-1)-qy=0\} \subseteq \A^2_{\C} \times_{\C} \PP^1_{\C}=\Spec(\C[y,s]) \times_{\C} \Proj(\C[p,q])$. Thus, on that neighbourhood, $\eta_{\alpha, \beta}$ can be written as $(0,y,u,v) \mapsto ((0,\tfrac{1}{\alpha}y), [u:1], [u-1:\tfrac{1}{\alpha}y])$. Using the equations in~\eqref{align: equations of affine surface} with $x=0$, we find
	\[[u-1:\tfrac{1}{\alpha}y]=[\alpha(u-1)(u-\beta): y(u-\beta)]=[v:u-\beta].\]
	Thus, the restriction of $i_{\alpha, \beta}$ to $\pi_{\alpha, \beta}^{-1}(p_1)=\{y=x=u-1=0\}$ is the open immersion $\Spec(\C[v]) \hookrightarrow \{((0,0), [1:1], [v:1-\beta])\}$ of $\A^1_{\C}$ in the exceptional divisor $A_1$. It is thus the complement of $[1:0]$ in $A_1$, which is the intersection of $A_1$ with $E(0,0)$.
	
	As before, we deduce that $i_{\alpha, \beta} \colon S_{\alpha, \beta} \rightarrow Z_{\alpha, \beta}$ is a quasi-finite birational morphism, and thus an open immersion. This proves~\ref{item: S_alpha,beta isomorphic to X_alpha,beta} and finishes the proof.
\end{proof}

We will work with both isomorphic surfaces $W_{\alpha, \beta}$ and $X_{\alpha, \beta}$ from Sections~\ref{subsec: first} and~\ref{subsec: second}, since we need the $X_{\alpha, \beta}$'s to allow for a short proof in Section~\ref{section: closed fibres are pairwise isomorphic}, whereas in Sections~\ref{section: birational diffeomorphisms} and~\ref{section: nonisomorphic real forms}, formulating the proof with the help of $(W_{\alpha},\rho_{\alpha})$ makes the exposition clearer. 

\section{Closed complex fibres are pairwise isomorphic} \label{section: closed fibres are pairwise isomorphic}

To show that the closed fibres of $p_{\C}\colon \mathcal{X}_{\C} \rightarrow \A^1_{\C}\smallsetminus \{0,1\}$ are pairwise isomorphic, we prove a stronger result. For this, recall the morphism $\widetilde{q}\colon \widetilde{S} \rightarrow C \times_{\C} C$ defined in Section~\ref{subsec: construction}. It has fibres over closed points $(\alpha, \beta)$ with $\alpha, \beta \in \C \smallsetminus \{0,1\}$ isomorphic to the surface $S_{\alpha, \beta}$ given in $\A^4_{\C}$ by the equations in~\eqref{align: equations of affine surface}. When we restrict it over the diagonal, we obtain $q \colon \mathcal{S} \rightarrow C$, which is isomorphic to $p_{\C} \colon \mathcal{X} \rightarrow \A^1_{\C} \smallsetminus \{0,1\}$.

\begin{proposition} \label{prop: all W_alpha,beta are isomorphic}
	Any two fibres of $\widetilde{q}\colon \widetilde{S} \rightarrow C \times_{\C} C$ over two closed points of $C\times_{\C} C$ are isomorphic, that is, for any $\alpha_1, \beta_1, \alpha_2, \beta_2 \in \C \smallsetminus \{0,1\}$ we have $S_{\alpha_1, \beta_1} \cong S_{\alpha_2, \beta_2}$. In particular, the closed fibres of $p_{\C}\colon \mathcal{X}_{\C} \rightarrow \A^1_{\C} \smallsetminus \{0,1\}$ are pairwise isomorphic.
\end{proposition}
\begin{proof}
	We first show that for any $\alpha, \beta \in \C \smallsetminus \{0,1\}$, the surfaces $X_{\alpha, \beta}$ and $X_{\alpha, \alpha}$ defined in Section~\ref{subsec: second} are isomorphic. It suffices to find a linear automorphism of $\A^2_{\C}$ which restricts to an automorphism of $L_y$ sending $(1,0)$, $(\alpha, 0)$ and $(0,0)$ to $(1,0)$, $(\alpha, 0)$ and $(0,0)$, respectively, and which sends the tangent directions $x-y=0$ and $x-\beta y=0$ at $(0,0)$ to the tangent directions $x-y=0$ and $x-\alpha y=0$ at $(0,0)$, respectively. These requirements are fulfilled by $(x,y) \mapsto (x+\tfrac{\alpha - \beta}{1-\alpha}y, \tfrac{1-\beta}{1-\alpha}y)$. This shows $X_{\alpha, \beta} \cong X_{\alpha, \alpha}$. From this, using both Lemma~\ref{lemma: W_alpha,beta iso to S_alpha,beta}~\ref{item: S_alpha,beta isomorphic to X_alpha,beta} and Remark~\ref{remark: S_alpha,beta iso to S_beta,alpha} repeatedly, $S_{\alpha_1, \beta_1} \cong X_{\alpha_1, \beta_1} \cong X_{\alpha_1, \alpha_2} \cong S_{\alpha_1, \alpha_2} \cong S_{\alpha_2, \alpha_1} \cong X_{\alpha_2, \alpha_1} \cong X_{\alpha_2, \beta_2} \cong S_{\alpha_2, \beta_2}$. This proves the first claim.
	
	Since $p_{\C}$ is just $\widetilde{q}$ restricted over the diagonal, the second claim follows immediately from the first, concluding the proof.
\end{proof}

Proposition~\ref{prop: all W_alpha,beta are isomorphic} implies Statement~\ref{item: main theorem, 1} of Theorem~\ref{thm: new main theorem}.

\section{Birational diffeomorphisms} \label{section: birational diffeomorphisms}

Not only are the $S_{\alpha}$ isomorphic over $\C$; whenever $\alpha \in \R \smallsetminus \{0,1\}$, the real loci of the $(S_{\alpha}, \rho_{\alpha})$'s are all birationally diffeomorphic, and we can describe the real locus explicitly.

\begin{proposition} \label{prop: birationally diffeomorphic}
	For any $\alpha \in \R\smallsetminus \{0, 1\}$, the real surface $(S_{\alpha},\sigma_{\alpha})$ is birationally diffeomorphic to the affine plane blown up in the origin. In particular, the fibres over $\R$-rational closed points of $p\colon \mathcal{X}\rightarrow \A^1_{\R} \smallsetminus \{0,1\}$ are pairwise birationally diffeomorphic smooth real rational surfaces.
\end{proposition}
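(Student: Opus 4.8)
The plan is to show that the real locus of $(W_{\alpha}, \rho_{\alpha})$ is, as a smooth manifold, exactly the real blow-up of $\R^2$ at the origin, and that the comparison with $\A^2_{\R}$ blown up at the origin is realised by a genuine real birational map which is an isomorphism on a neighbourhood of every real point. The guiding principle is that the entire discrepancy between $W_{\alpha}$ and the plain blow-up of the affine plane lives in the non-real locus.

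First I would pin down the real locus. Since $\alpha \in \R \smallsetminus \{0,1\}$, among the five blown-up points only $(0,0)$ is real, while $(1,i),(1,-i)$ and $(\alpha, \alpha i),(\alpha, -\alpha i)$ form two complex-conjugate pairs; their four exceptional curves are swapped in pairs by $\widetilde{\rho}_{\alpha}$ and hence carry no real points. Likewise $L_{x+iy}$ and $L_{x-iy}$ are exchanged by the real structure and meet $E(0,0)$ in a conjugate pair of non-real points, so removing them deletes no real point; the only real points deleted from $Y_{\alpha}$ lie on the real line at infinity $L_z$. Consequently $W_{\alpha}(\R)$ is obtained from $\PP^2(\R)$ by blowing up one real point and then deleting the line at infinity, i.e. it is the real blow-up of $\R^2$ at the origin, an open M\"obius band. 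The same picture can be read off from the affine-modification description of Definition~\ref{def: W_alpha,beta}: in the local ring at the origin the factor $(x-1)(x-\alpha)$ is a unit, so $I\cdot\mathcal{O}_{(0,0)}=(x,y)$, and near $E(0,0)$ the surface $W_{\alpha}$ is the ordinary blow-up of $\A^2_{\C}$ at the origin with the two non-real strict transforms of $x\pm iy=0$ removed.

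Next I would build the birational map. Writing $W_{\alpha}$ as the blow-up of $\A^2_{\C}$ in the five points with $\widetilde{L}_{x+iy}, \widetilde{L}_{x-iy}$ removed, I contract the four exceptional curves over the two conjugate pairs. As these four points form conjugate pairs, this contraction $c$ is defined over $\R$; it is an isomorphism away from four non-real points, hence an isomorphism on a neighbourhood of every real point. Its image is $Z := (\A^2_{\C}$ blown up at the origin$)\smallsetminus(\widetilde{L}_{x+iy}\cup \widetilde{L}_{x-iy})$, the four contracted points having been absorbed into the removed lines. Composing $c$ with the open immersion $Z \hookrightarrow (\A^2_{\R}$ blown up at $0)$ — which is a real birational map defined at every real point precisely because $\widetilde{L}_{x+iy}\cup \widetilde{L}_{x-iy}$ contains no real point — yields a real birational map $(W_{\alpha},\rho_{\alpha}) \dashrightarrow (\A^2_{\R}\text{ blown up at }0)$ defined at every point of $W_{\alpha}(\R)$ and restricting to a diffeomorphism onto the real locus of the target. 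The ``in particular'' then follows because birational diffeomorphism is an equivalence relation: all the $(W_{\alpha},\rho_{\alpha})$ are birationally diffeomorphic to one fixed model, hence pairwise birationally diffeomorphic.

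The step I expect to require the most care is the first one: verifying rigorously that the four extra exceptional curves and the two removed lines are confined to the non-real locus, so that passing through these modifications changes nothing at real points. Establishing that $c$ is an honest isomorphism near the real locus, and that near $E(0,0)$ the surface really is the ordinary blow-up via the localisation $I\cdot\mathcal{O}_{(0,0)}=(x,y)$, is exactly what upgrades the conclusion from a diffeomorphism of real loci to a birational diffeomorphism.
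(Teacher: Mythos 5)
Your proposal is correct and follows essentially the same route as the paper: both proofs rest on the observation that the four exceptional curves over the conjugate pairs $(1,\pm i)$, $(\alpha,\pm\alpha i)$ and the removed strict transforms of $L_{x+iy}$, $L_{x-iy}$ carry no real points, so that the real locus of $(W_{\alpha},\rho_{\alpha})$ is exactly the blow-up of $\A^2_{\R}$ at the origin. Your explicit construction of the contracting birational map (and the local check $I\cdot\mathcal{O}_{(0,0)}=(x,y)$) simply spells out details the paper leaves implicit.
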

\begin{proof}
	By Lemma~\ref{lemma: W_alpha,beta iso to S_alpha,beta}~\ref{item: S_alpha,beta isomorphic to W_alpha,beta} and~\ref{item: real iso between S_alpha and W_alpha}, we can consider $(W_{\alpha}, \rho_{\alpha})$ instead of $(S_{\alpha}, \sigma_{\alpha})$. The real points of $(W_{\alpha},\rho_{\alpha})$ are either coming from points of $\A^2_{\C}$ invariant under $(x,y)\mapsto (\overline{y},\overline{x})$ or lying on an exceptional curve which is the blow-up of a point in $\A^2_{\C}$ invariant under $(x,y)\mapsto (\overline{y},\overline{x})$. Since the only point on $L_{x}\cup L_{y}$ which is invariant under this action on $\A^2_{\C}$ is $(0,0)$, the points of $W_{\alpha}$ on $E(1,0)$, $E(\alpha, 0)$, $E(0,1)$ and $E(0, \alpha)$ cannot be part of the real locus, compare with Figure~\ref{figure: W_alpha,beta}. Thus, the real locus of $(W_{\alpha}, \rho_{\alpha})$ is birationally diffeomorphic to the blow-up of $\A^2_{\R}$ in the origin. This proves the first claim.
	
	As the fibres of $p$ over $\R$-rational closed points are isomorphic to the $(W_{\alpha}, \rho_{\alpha})$ with $\alpha \in \R\smallsetminus \{0,1\}$, they are all pairwise birationally diffeomorphic. This finishes the proof.
\end{proof}

Proposition~\ref{prop: birationally diffeomorphic} implies Statement~\ref{item: main theorem, 2} of Theorem~\ref{thm: new main theorem}. Also note that the affine plane blown up in a point is homeomorphic to the open Moebius band, which further describes the real locus of $(W_{\alpha}, \rho_{\alpha})$ topologically.

\section{Nonisomorphic real forms} \label{section: nonisomorphic real forms}

This sections is dedicated to proving that fibres of $p\colon \mathcal{X} \rightarrow \A^1_{\R} \smallsetminus \{0,1\}$ over $\R$-rational closed points $\alpha, \beta$ are isomorphic if and only if $\alpha=\beta$ or $\alpha\beta=1$. We first prove an auxiliary result on the curves of negative self-intersection on $Y_{\alpha}$, the surfaces one obtains by completing $W_{\alpha}$ with the zigzag in Section~\ref{subsec: first}.

\begin{lemma} \label{lemma: all curves of negative self-intersection on Y_alpha}
	Fix $\alpha \in \C\smallsetminus \{0,1\}$. The curves of negative self-intersection on $Y_{\alpha}$ are precisely the exceptional curves $E(0,0)$, $E(1,0)$, $E(\alpha, 0)$, $E(0,1)$, $E(0,\alpha)$ and the strict transforms of the lines $L_{x}$, $L_{y}$, $L_{x+y-z}$, $L_{x+y-\alpha z}$, $L_{x+\alpha y-\alpha z}$ and $L_{\alpha x +y -\alpha z}$.
\end{lemma}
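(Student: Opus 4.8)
The plan is to classify the irreducible curves $C \subset Y_\alpha$ with $C^2 < 0$ by separating those contracted by the blow-down $\pi_\alpha \colon Y_\alpha \to \PP^2_\C$ from the strict transforms of plane curves, and to bound the degree of the latter using B\'ezout's theorem together with the genus inequality. Conceptually $Y_\alpha$ is a weak del Pezzo surface of degree $4$ (one computes $K^2 = 9 - 5 = 4$), so one expects only $(-1)$- and $(-2)$-curves; but the argument below avoids invoking this and stays elementary.

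First I would record the configuration of the five blown-up points $p_0 = (0,0)$, $p_1 = (1,i)$, $p_2 = (\alpha, \alpha i)$, $p_3 = (1,-i)$, $p_4 = (\alpha, -\alpha i)$. A direct substitution shows that $p_0, p_1, p_2$ lie on $L_{x+iy}$ and $p_0, p_3, p_4$ lie on $L_{x-iy}$, and that for $\alpha \in \C \smallsetminus \{0,1\}$ these are the only lines passing through three of the points, the five points being pairwise distinct. Consequently the exceptional curves $E(0,0), E(1,i), E(\alpha,\alpha i), E(1,-i), E(\alpha,-\alpha i)$, whose classes I denote $E_0, \dots, E_4$, are five disjoint $(-1)$-curves, and they are the only irreducible curves contracted by $\pi_\alpha$.

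Next, let $C$ be an irreducible curve with $C^2 < 0$ that is \emph{not} contracted by $\pi_\alpha$, and write $\bar C = \pi_\alpha(C)$ for its image, an irreducible plane curve of degree $d \ge 1$, so that the class of $C$ is $dH - \sum_i m_i E_i$ with $m_i = \operatorname{mult}_{p_i}(\bar C) \ge 0$. If $\bar C$ is one of $L_{x+iy}, L_{x-iy}$, then $C$ is one of the two $(-2)$-curves in the list. Otherwise, applying B\'ezout to $\bar C$ against each of $L_{x+iy}$ and $L_{x-iy}$ gives $m_0 + m_1 + m_2 \le d$ and $m_0 + m_3 + m_4 \le d$, whence $\sum_i m_i \le 2d$. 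On the other hand $C^2 = d^2 - \sum_i m_i^2 < 0$ forces $\sum_i m_i^2 \ge d^2 + 1$, while the adjunction computation $p_a(C) = \binom{d-1}{2} - \sum_i \binom{m_i}{2} \ge 0$ (valid since $C$ is integral) gives $\sum_i m_i^2 - \sum_i m_i \le (d-1)(d-2)$; subtracting these yields $\sum_i m_i \ge 3d - 1$. Combining $3d - 1 \le \sum_i m_i \le 2d$ forces $d = 1$.

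It then remains to enumerate the lines through at least two of the five points other than $L_{x\pm iy}$: by the collinearity analysis of the second step these correspond exactly to the four pairs $\{p_1, p_3\}, \{p_2, p_4\}, \{p_2, p_3\}, \{p_1, p_4\}$, and I would check by substitution that the unique lines through them are $L_{x-z}$, $L_{x-\alpha z}$, $L_{(\alpha+1)x+(\alpha-1)iy-2\alpha z}$ and $L_{(\alpha+1)x-(\alpha-1)iy-2\alpha z}$, each a $(-1)$-curve since it contains exactly two of the $p_i$. This exhausts the list. The main obstacle is not conceptual but the bookkeeping: confirming the collinearity pattern (two triples sharing $p_0$, no other collinear triple) and verifying that the four displayed equations pass through the intended pairs and through no third point. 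The degree bound is the crux, and it is precisely the special position of the points --- the two collinear triples meeting at $p_0$ --- that feeds the two-line B\'ezout estimate $\sum_i m_i \le 2d$ and collapses $d$ to $1$.
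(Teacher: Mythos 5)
Your proposal is correct, and its skeleton is the same as the paper's: write a hypothetical negative curve (other than the exceptional curves and the two distinguished lines) as the strict transform of an irreducible plane curve of degree $d$ with multiplicities $m_i$ at the five centres, play the adjunction/genus inequality (which yields $\sum_i m_i \ge 3d-1$ once $C^2\le -1$) against B\'ezout bounds coming from the boundary, conclude $d=1$, and finish by enumerating the lines through pairs of the points. The one genuine difference is the middle step. The paper first intersects $C$ with an irreducible conic through the four non-origin points $(1,\pm i)$, $(\alpha,\pm\alpha i)$ --- noting such a conic cannot pass through the origin --- to bound the sum of the four outer multiplicities by $2d$, deduces that the multiplicity at the origin equals exactly $d-1$, and only then uses B\'ezout with the two lines $L_{x+iy}$, $L_{x-iy}$ to force $d=1$. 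You instead add the two line inequalities $m_0+m_1+m_2\le d$ and $m_0+m_3+m_4\le d$ at the outset, obtaining $\sum_i m_i \le 2d-m_0\le 2d$, which collides with $\sum_i m_i\ge 3d-1$ and eliminates all $d\ge 2$ in one stroke. This is a real streamlining: it shows the auxiliary conic, and the intermediate determination of the origin multiplicity, are unnecessary, because the special position of the points (two collinear triples meeting at the origin) is already fully exploited by the two lines alone. Your closing enumeration of the pairs $\{p_1,p_3\}$, $\{p_2,p_4\}$, $\{p_2,p_3\}$, $\{p_1,p_4\}$ and the four corresponding lines agrees with the paper's list; just be sure to actually carry out the promised substitution check that none of these lines passes through a third centre (this is exactly where $\alpha\neq 0,1$ is used), and note that your side remark that $Y_\alpha$ is a weak del Pezzo surface of degree $4$ is true but, as you say, never invoked.
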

\begin{proof}
	Consider an irreducible curve $C \subseteq \PP^2_{\C}$ of degree~$d$ not equal to $L_{x}$ or $L_{y}$ whose strict transform $\widetilde{C}$ on $Y_{\alpha}$ has negative self-intersection. Denote the multiplicities of $C$ at $[0:0:1]$, $[1:0:1]$, $[\alpha :0:1]$, $[0:1:1]$ and $[0:\alpha :1]$ by $m_1$, $m_2$, $m_3$, $m_4$ and $m_5$, respectively. Since the curve $\widetilde{C}$ has negative self-intersection, we find $d^2-\sum_{i=1}^{5}m_i^2\leq -1$. The geometric genus of $\widetilde{C}$ satisfies
	\begin{align*}
		0\leq 2g &=(d-1)(d-2)-\sum_{i=1}^{5}m_i(m_i-1) \\
		&=d^2-\sum_{i=1}^{5}m_i^2+1 -3d+1+\sum_{i=1}^{5}m_i,
	\end{align*}
	which combined with the negative self-intersection results in $0\leq \sum_{i=1}^{5}m_i - 3d +1$. Taking the intersection of $C$ with any irreducible conic passing through $[1:0:1]$, $[\alpha :0:1]$, $[0:1:1]$ and $[0:\alpha :1]$, we obtain $2d-m_2-m_3-m_4-m_5\geq 0$, since any such conic cannot pass through $(0,0)$. We therefore find $d-1 \leq m_1$, which implies $m_1=d-1$. Taking the intersection of the curve $C$ with  $L_{x}$ and $L_{y}$ implies $d-m_1-m_2-m_3\geq0$ and $d-m_1-m_4-m_5\geq0$, hence $m_2+m_3\leq 1$ and $m_4+m_5\leq 1$. This shows $\sum_{i=2}^{5}m_i^2\leq 2$, and thus $-1\geq d^2-(d-1)^2-\sum_{i=2}^{5}m_i^2\geq 2d-1-2$. Thus, $d=1$, $m_1=0$ and $(m_2, m_3, m_4, m_5) \in \{(1,0,1,0), (1,0,0,1), (0,1,0,1), (0,1,1,0)\}$, which give precisely the lines as stated in the lemma. This proves the claim.
\end{proof}

Using Lemma~\ref{lemma: all curves of negative self-intersection on Y_alpha}, we can show that if two real surfaces $(W_{\alpha}, \rho_{\alpha})$ and $(W_{\beta}, \rho_{\beta})$ are isomorphic, then this isomorphism is in fact the lift of a  linear automorphism on $(\A^2_{\C}, \rho)$. Here, $\rho$ is the real structure defined in~\ref{subsec: first}.

\begin{lemma} \label{lemma: all isos of W_alpha are linear}
	Fix $\alpha, \beta \in \R\smallsetminus \{0,1\}$. Then any isomorphism $\varphi \colon (W_{\alpha}, \rho_{\alpha}) \overset{\sim}{\rightarrow} (W_{\beta}, \rho_{\beta})$ arises from a linear automorphism of $(\A^2_{\C}, \rho)$ sending $V(xy)$ to itself and the sets of points $\{(1,0), (\alpha,0)\}$ and $\{(0,1), (0, \alpha)\}$ to $\{(1,0), (\beta,0)\}$ and $\{(0,1), (0,\beta)\}$ or vice versa.
\end{lemma}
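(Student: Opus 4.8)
The plan is to extend $\varphi$ to a birational map of the projective completions $Y_\alpha$ and $Y_\beta$, to argue that this extension is in fact a boundary-preserving biregular isomorphism, and then to descend it along the contractions to $\PP^2_\C$ to a linear automorphism of $\A^2_\R$.

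Since $W_\gamma$ is open and dense in the smooth projective surface $Y_\gamma$, the isomorphism $\varphi$ extends to a birational map $\Phi\colon Y_\alpha \dashrightarrow Y_\beta$ restricting to $\varphi$ on $W_\alpha$. Granting that $\Phi$ is a biregular isomorphism carrying the boundary zigzag $B_\alpha = L_{x+iy} \cup L_z \cup L_{x-iy}$ onto $B_\beta$, the action on the boundary is forced: the central curve $L_z$ is the unique component of the zigzag of self-intersection $+1$, whereas $L_{x+iy}$ and $L_{x-iy}$ have self-intersection $-2$, so $\Phi$ fixes $L_z$ and permutes the pair $\{L_{x+iy}, L_{x-iy}\}$. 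Next I would use Lemma~\ref{lemma: all curves of negative self-intersection on Y_alpha} together with a short incidence computation: among the $(-1)$-curves it lists, the five exceptional curves $E(\cdot)$ are precisely those meeting $L_{x+iy}$ or $L_{x-iy}$ (and none of them meets $L_z$), while the four remaining line curves meet only $L_z$. As $\Phi$ preserves $L_z$ and the pair $\{L_{x+iy},L_{x-iy}\}$, it must send the five exceptional curves of $Y_\alpha$ to the five exceptional curves of $Y_\beta$; hence $\Phi$ is compatible with the two contractions to $\PP^2_\C$ and descends to some $\psi \in \Aut(\PP^2_\C)$. This $\psi$ fixes the line at infinity $L_z$, preserves $V(x^2+y^2) = L_{x+iy}\cup L_{x-iy}$ and therefore fixes its vertex $(0,0)$, so it is a linear automorphism of $\A^2_\C$ carrying $\{(1,i),(\alpha,\alpha i),(1,-i),(\alpha,-\alpha i)\}$ onto $\{(1,i),(\beta,\beta i),(1,-i),(\beta,-\beta i)\}$. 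Finally, because $\varphi$ intertwines $\rho_\alpha$ and $\rho_\beta$, the descended map $\psi$ commutes with $(x,y)\mapsto(\overline{x},\overline{y})$ and is thus defined over $\R$; a real linear map preserving the conjugate pair of lines either fixes each line or interchanges them, which produces precisely the two alternatives of the statement.

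The crux, and the step I expect to be the main obstacle, is justifying that $\Phi$ is a boundary-preserving biregular isomorphism. This cannot be deduced from complex geometry alone: by Proposition~\ref{prop: all W_alpha,beta are isomorphic} the surfaces $W_\alpha$ and $W_\beta$ are isomorphic over $\C$ for all parameters, while the descent above shows that a linear $\psi$ with the required incidences can only exist when $\alpha=\beta$ or $\alpha\beta=1$. Thus the ``wild'' automorphisms of these Gizatullin surfaces genuinely fail to extend biregularly to the completions, and the rigidity has to be extracted from the real structure. Here I would invoke Proposition~\ref{prop: birationally diffeomorphic}: the real locus $W_\alpha(\R)$ is an open Moebius band whose core, the unique essential embedded circle up to isotopy, is the real locus $E(0,0)(\R)$ of the exceptional curve over the origin. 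Since $\varphi$ restricts to a diffeomorphism of real loci, it sends this essential circle to an essential circle; and $E(0,0)$ is the only curve among the finitely many negative curves of Lemma~\ref{lemma: all curves of negative self-intersection on Y_alpha} whose real locus is a compact essential circle, the other real negative curves $L_{x-z}$ and $L_{x-\alpha z}$ having real locus diffeomorphic to $\R$. This forces $\varphi(E(0,0)) = E(0,0)$. Propagating this rigidity through the finite incidence graph of negative curves, and in particular ruling out that $\Phi$ contracts a boundary component or acquires base points along the boundary, is the delicate point that upgrades $\Phi$ to the required boundary-preserving isomorphism and closes the argument.
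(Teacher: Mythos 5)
Your architecture is the right one, and your second half is essentially the paper's argument: once a boundary-preserving isomorphism $Y_{\alpha} \overset{\sim}{\rightarrow} Y_{\beta}$ is in hand, the self-intersections force $L_z \mapsto L_z$ and the pair $\{L_{x+iy}, L_{x-iy}\}$ to be preserved or swapped, the incidence with the curves of Lemma~\ref{lemma: all curves of negative self-intersection on Y_alpha} forces the five exceptional curves onto the five exceptional curves, and the map then descends to a real linear automorphism with the stated properties. But there is a genuine gap at exactly the step you yourself flag as the crux: you never prove that the extension $\Phi$ is a boundary-preserving biregular isomorphism --- you ``grant'' it and then sketch a topological route towards it, which moreover does not work as stated. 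From the fact that $\varphi$ restricts to a diffeomorphism of real loci sending the core circle $E(0,0)(\R)$ of the Moebius band to an essential circle, you conclude $\varphi(E(0,0))=E(0,0)$ by comparing with the finitely many negative curves of Lemma~\ref{lemma: all curves of negative self-intersection on Y_alpha}; however, the algebraic curve $\varphi(E(0,0)) \subset W_{\beta}$ has no reason a priori to be the restriction of a negative curve of $Y_{\beta}$. For example, the strict transform of the real conic $x^2+y^2=x$ gives a curve in $W_{\beta}$ isomorphic to $\A^1_{\C}\smallsetminus\{0\}$, disjoint from $L_{x+iy}$ and $L_{x-iy}$, whose real locus is a compact circle crossing the core once, hence essential; nothing in your argument excludes it (or infinitely many curves like it) as the image of $E(0,0)$. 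And even granting $\varphi(E(0,0))=E(0,0)$, the ``propagation'' to the absence of base points of $\widetilde{\varphi}$ along the boundary is precisely the content of the lemma and is left open.

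For comparison, the paper closes this gap by a short base-point analysis in which the real structure, not the topology of the real locus, does the work. If $\widetilde{\varphi}$ had a base point, then a proper base point would be the image under $\widetilde{\varphi}^{-1}$ of a boundary curve of nonnegative self-intersection, hence of $L_z$, since $L_{x\pm iy}$ have self-intersection $-2$; as $\widetilde{\varphi}$ is real and the two points $L_z\cap L_{x\pm iy}$ are exchanged by $\widetilde{\rho}_{\alpha}$, this base point is a real point of $L_z \smallsetminus (L_{x+iy}\cup L_{x-iy})$, and the symmetric argument shows that $\widetilde{\varphi}$ contracts $L_z \subset Y_{\alpha}$. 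Resolving $\widetilde{\varphi}$, no base point lies on $L_{x\pm iy}$, so their self-intersections can only be raised by first contracting $L_z$; after that contraction they become $(-1)$-curves meeting at the real image point of $L_z$, and they are exchanged by $\widetilde{\rho}_{\alpha}$, so a real morphism can neither contract only one of them nor both (contracting one leaves the other with self-intersection $0$). This contradiction shows $\widetilde{\varphi}$ has no base points at all. Note that this is exactly where the ``wild'' complex automorphisms implied by Proposition~\ref{prop: all W_alpha,beta are isomorphic} are killed: over $\C$ one may contract $L_{x+iy}$ alone, over $\R$ one cannot. To repair your proof you would need to supply an argument of this kind; completing your topological sketch instead would require controlling the image of $E(0,0)$ among all curves of $W_{\beta}$, not merely the negative ones, which looks substantially harder.
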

\begin{proof}
	Consider the compactifications $(Y_{\alpha}, \widetilde{\rho}_{\alpha})$ and $(Y_{\beta}, \widetilde{\rho}_{\beta})$ of $(W_{\alpha}, \rho_{\alpha})$ and $(W_{\beta}, \rho_{\beta})$ (see~Section~\ref{subsec: first}). Here,  $\widetilde{\rho}_{\alpha}$ is the real structure which arises from the real structure $[x:y:z] \mapsto [\overline{y}:\overline{x}:\overline{z}]$ on $\PP^2_{\C}$, and which restricts to $\rho_{\alpha}$ when restricting to $W_{\alpha}$; similarly for $\widetilde{\rho}_{\beta}$. Consider an isomorphism $\varphi \colon (W_{\alpha}, \rho_{\alpha}) \overset{\sim}{\rightarrow} (W_{\beta}, \rho_{\beta})$, which induces a birational map $\widetilde{\varphi} \colon (Y_{\alpha}, \widetilde{\rho}_{\alpha}) \overset{\sim}{\dashrightarrow} (Y_{\beta}, \widetilde{\rho}_{\beta})$. We call a base point of a birational map proper if it is a point of the domain, and not lying on an exceptional curve after some other blow-ups.
	
	If $\widetilde{\varphi}$ is not a morphism, then at least one proper base point of $\widetilde{\varphi}$ is the image of an irreducible curve of nonnegative self-intersection in $Y_{\beta}\smallsetminus W_{\beta}$ by $\widetilde{\varphi}^{-1}$. Since $Y_{\beta}\smallsetminus W_{\beta}$ is the union of $L_{x}$, $L_{y}$ and $L_z$ having self-intersections $-2$, $-2$ and $1$, this curve contracted by $\widetilde{\varphi}^{-1}$ must be $L_z$. As $\widetilde{\varphi}$ is real with respect to the lifts $\widetilde{\rho}_{\alpha}, \widetilde{\rho}_{\beta}$, this point can only lie on $L_z\smallsetminus (L_{x}\cup L_{y})$ in $Y_{\alpha}$. By an analogous analysis with the roles of $\widetilde{\varphi}$ and $\widetilde{\varphi}^{-1}$ reversed, we find that $L_z$ is contracted by $\widetilde{\varphi}$. But then, after a sequence of blow-ups, since $L_z$ was contracted and no base point lies on $L_{x}$ or $L_{y}$, the strict transforms of $L_{x}$ and $L_{y}$ become $(-1)$-curves, which have to be contracted again. This is impossible, since we cannot contract one without contracting the other, as they are exchanged by the real structures $\widetilde{\rho}_{\alpha}, \widetilde{\rho}_{\beta}$.
	
	Thus, $\widetilde{\varphi}$ cannot have any base points at all, and is therefore an isomorphism between the two projective surfaces $(Y_{\alpha}, \widetilde{\rho}_{\alpha})$ and $(Y_{\beta}, \widetilde{\rho}_{\beta})$ which sends $L_z$ to $L_z$. By Lemma~\ref{lemma: all curves of negative self-intersection on Y_alpha}, the irreducible curves of negative self-intersection on $Y_{\alpha}$ are precisely the exceptional curves $E(0,0)$, $E(1,0)$, $E(\alpha, 0)$, $E(0,1)$, $E(0,\alpha)$, each with self-intersection $-1$, and the strict transforms of the lines $L_{x}$, $L_{y}$, $L_{x+y-z}$, $L_{x+y-\alpha z}$, $L_{x+\alpha y -\alpha z}$ and $L_{\alpha x+y-\alpha z}$, with self-intersection $-2$, $-2$, $-1$, $-1$, $-1$, and $-1$, respectively. Thus, $L_{x}$ and $L_{y}$ are either sent to themselves or exchanged; and by considering their intersections with the other curves, $E(0,0)$ has to be sent to itself, and either $\{E(1,0), E(\alpha, 0)\}$ is sent to $\{E(1,0), E(\beta, 0)\}$, and $\{E(0,1), E(0,\alpha)\}$ is sent to $\{E(0,1), E(0,\beta)\}$, or vice versa. But this implies that $\widetilde{\varphi}$ is the lift of an automorphism of $(\PP^2_{\C}, [x:y:z] \mapsto [\overline{y}:\overline{x}:\overline{z}])$.
	
	We can therefore deduce that the isomorphism $\varphi$ arises from a linear automorphism of $(\A^2_{\C}, \rho)$ which sends $L_z$ to $L_z$, $V(xy)$ to itself and the points $\{(1,0), (\alpha,0)\}$ to $\{(1,0), (\beta,0)\}$, and $\{(0,1), (0,\alpha)\}$ to $\{(0,1), (0,\beta)\}$, or vice versa, as claimed.
\end{proof}

We now have all tools needed to prove the content of the last statement of Theorem~\ref{thm: new main theorem}, which we rephrase in the following proposition.

\begin{proposition} \label{prop: third assertion of main theorem}
	Any two fibres $p^{-1}(\alpha)\cong(W_{\alpha}, \rho_{\alpha})$, $p^{-1}(\beta)\cong(W_{\beta}, \rho_{\beta})$ for $\alpha, \beta \in \R \smallsetminus \{0,1\}$ of $p\colon \mathcal{X} \rightarrow \A^1_{\R} \smallsetminus \{0,1\}$ are isomorphic if and only if $\alpha\beta=1$ or $\alpha=\beta$.
\end{proposition}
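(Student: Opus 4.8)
The plan is to use Lemma~\ref{lemma: all isos of W_alpha are linear}, which reduces any isomorphism $(W_{\alpha},\rho_{\alpha}) \overset{\sim}{\to} (W_{\beta},\rho_{\beta})$ to a linear automorphism $\phi$ of $\A^2_{\R}$ preserving $V(x^2+y^2)$ (the pair of conjugate lines $L_{x+iy}, L_{x-iy}$) and sending the configuration $\{(1,i),(\alpha,\alpha i)\}$, $\{(1,-i),(\alpha,-\alpha i)\}$ to $\{(1,i),(\beta,\beta i)\}$, $\{(1,-i),(\beta,-\beta i)\}$ or vice versa. The strategy is thus to classify exactly which real linear maps can preserve this data, extract the resulting constraints relating $\alpha$ and $\beta$, and show they force $\alpha=\beta$ or $\alpha\beta=1$; conversely, to exhibit explicit linear maps realising the isomorphism in both these cases.

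First I would determine the real linear automorphisms of $\A^2_{\C}$ fixing $V(x^2+y^2)$ as a set. Since the complexified map $\phi_{\C}$ must permute the two lines $x+iy=0$ and $x-iy=0$, and $\phi$ is real (commutes with $(x,y)\mapsto(\overline{x},\overline{y})$, which swaps these two lines), $\phi$ either fixes each line or swaps them. In coordinates $w=x+iy$, $\overline{w}=x-iy$, a real linear map acts either as $w\mapsto \lambda w$ (fixing each line, $\lambda\in\C^{\ast}$ with the reality condition, giving a rotation-scaling of $\R^2$) or as $w\mapsto \lambda\overline{w}$ (swapping the lines, a reflection-scaling). I would write both families explicitly and see how they act on the blown-up points, which lie on the two conjugate lines: the points $(1,i),(\alpha,\alpha i)$ sit on $L_{x-iy}$ (since $x-iy=1-i\cdot i=1+1\neq 0$ — I would double-check the incidence carefully, as the labelling of which point lies on which line is exactly where sign errors creep in) and their conjugates on the other line.

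The core computation is then: a map of the form $w\mapsto\lambda w$ restricts on each conjugate line to multiplication by a scalar, so it sends the points with ``line-coordinates'' proportional to $1$ and $\alpha$ to points with line-coordinates proportional to $\lambda$ and $\lambda\alpha$; matching the unordered pair $\{1,\alpha\}$ (up to the common scalar $\lambda$, i.e.\ up to projective scaling on the exceptional $\PP^1$ over the blown-up intersection point) to $\{1,\beta\}$ forces $\beta=\alpha$. A map of the swapping form $w\mapsto\lambda\overline{w}$ instead inverts the roles, matching $\{1,\alpha\}$ against $\{1,\beta\}$ after an inversion, which forces $\alpha\beta=1$ (equivalently $\beta=1/\alpha$). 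I expect the cleanest way to see both constraints uniformly is to pass to the cross-ratio or to the ratio of the two tangent directions $1:\alpha$ at the blown-up point $(0,0)$ along each conjugate line: a linear map fixing the lines preserves this ratio (giving $\alpha=\beta$), while one swapping the lines sends the ratio to its inverse (giving $\alpha\beta=1$). The delicate point, and the main obstacle, is bookkeeping the projective normalisation correctly — the blown-up points are determined only up to the scaling of the affine coordinate on each line, so one must argue invariantly (via the cross-ratio of the four relevant points $0,\infty,1,\alpha$ on the line, or via the pair of tangent directions at the origin) rather than comparing raw coordinates, to avoid a spurious free parameter.

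Finally, for the converse I would simply write down the two explicit isomorphisms. When $\alpha=\beta$ the identity works. When $\alpha\beta=1$, the reflection swapping the two conjugate lines — in coordinates $(x,y)\mapsto(x,-y)$, i.e.\ $w\mapsto\overline{w}$, possibly composed with a suitable scaling $(x,y)\mapsto(\alpha x,\alpha y)$ to normalise the distinguished point $(1,\pm i)$ — sends the configuration for $\alpha$ to that for $1/\alpha=\beta$; I would verify it is real and preserves $V(x^2+y^2)$, so by Proposition~\ref{prop: blow-up and real structure} it lifts to an isomorphism of the real surfaces. Combining the forward direction (the only admissible linear maps force $\alpha=\beta$ or $\alpha\beta=1$) with these explicit constructions proves the equivalence, establishing Statement~\ref{item: main theorem, 3} of Theorem~\ref{thm: new main theorem}.
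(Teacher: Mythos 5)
Your overall strategy is sound and runs parallel to the paper's: invoke Lemma~\ref{lemma: all isos of W_alpha are linear} to reduce everything to a real linear automorphism of $\A^2_{\R}$ preserving $V(x^2+y^2)$, then determine which such maps can match the point configurations. (The paper does this in raw coordinates: writing the map as $(x,y)\mapsto(ax+by,cx+dy)$, the fact that the image of $(1,i)$ must again have real first and purely imaginary second coordinate forces $b=c=0$, after which four explicit cases are checked.) However, your core computation contains a genuine error: the dichotomy ``$w\mapsto\lambda w$ (fixing each line) forces $\alpha=\beta$, while $w\mapsto\lambda\overline{w}$ (swapping the lines) forces $\alpha\beta=1$'' is false, and the paper's own proof supplies counterexamples to both implications. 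The scaling $(x,y)\mapsto(\beta x,\beta y)$, i.e.\ $w\mapsto\beta w$, fixes each of the two conjugate lines and gives an isomorphism $(W_{\alpha},\rho_{\alpha})\overset{\sim}{\rightarrow}(W_{\beta},\rho_{\beta})$ exactly when $\alpha\beta=1$; the reflection $(x,y)\mapsto(x,-y)$, i.e.\ $w\mapsto\overline{w}$, swaps the two lines and gives such an isomorphism exactly when $\alpha=\beta$. The reason is that in the line-fixing case the matching condition is the unordered-pair equation $\{\lambda,\lambda\alpha\}=\{1,\beta\}$, which has \emph{two} solution families, $\lambda=1,\ \alpha=\beta$ and $\lambda=\beta,\ \alpha\beta=1$; and in the line-swapping case one obtains exactly the same equation (a scalar map from one line to the other preserves the ratio of the two parameters, so no inversion appears --- both lines of $W_{\alpha}$ carry pairs with the same ratio $\alpha$). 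Your proposed invariant fix, via the cross-ratio of $0,\infty,1,\alpha$ or the ratio of tangent directions, does not repair this: for an \emph{unordered} pair of points that invariant is only well defined up to inversion, so each of your two cases correctly yields the disjunction ``$\alpha=\beta$ or $\alpha\beta=1$,'' not the clean split you assert.

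The error is not fatal, because the union of the two (correctly computed) cases is still the desired forward implication; the proof is repaired simply by carrying the full disjunction through each case rather than discarding one solution per case. Two smaller slips to fix as well. First, the points $(1,i)$ and $(\alpha,\alpha i)$ lie on $L_{x+iy}$ (there $x+iy=1+i\cdot i=0$), not on $L_{x-iy}$; you flagged this yourself. Second, in the converse direction for $\alpha\beta=1$ the scaling constant must be $\beta=1/\alpha$, not $\alpha$: the map $(x,y)\mapsto(\beta x,\beta y)$ alone already works (no reflection needed), sending $(1,i)\mapsto(\beta,\beta i)$ and $(\alpha,\alpha i)\mapsto(1,i)$, whereas your $(x,y)\mapsto(\alpha x,-\alpha y)$ sends $(1,i)$ to $(\alpha,-\alpha i)$, which lies in the $\beta$-configuration only when $\beta=\pm 1$.
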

\begin{proof}
	By Lemma~\ref{lemma: all isos of W_alpha are linear}, any $\R$-isomorphism $(W_{\alpha}, \rho_{\alpha}) \overset{\sim}{\rightarrow} (W_{\beta}, \rho_{\beta})$ arises from a linear automorphism of $(\A^2_{\C}, (x,y)\mapsto (\overline{y},\overline{x}))$ sending $V(xy)=L_x\cup L_y$ to itself and the sets of points $\{(1,0), (\alpha,0)\}$ and $\{(0,1), (0,\alpha)\}$ to $\{(1,0), (\beta,0)\}$ and $\{(0,1), (0,\beta)\}$, or vice versa. 
	
	Any linear automorphism of $(\A^2_{\C}, \rho)$ is of the form $(x,y) \mapsto (ax+by, cx+dy)$ with $\left(\begin{smallmatrix}
		a & b \\ c&d
	\end{smallmatrix}\right) \in \GL_2(\C)$ and $\overline{a}=d$, $\overline{b}=c$. Since the automorphism under consideration has to send $V(xy)=L_x\cup L_y$ to itself, we find it can only be one of the following: $\psi_1\colon (x,y) \mapsto (ax, \overline{a}y)$ or $\psi_2\colon (x,y) \mapsto (by, \overline{b}x)$. Under $\psi_1$, the set $\{(1,0), (\alpha,0)\}$ can only be sent to $\{(1,0), (\beta,0)\}$. Then either $\alpha=\beta$ and $a=1$ or $\alpha\beta=1$ and $a=\beta$. The same argument with $\psi_2$ sending $\{(1,0), (\alpha,0)\}$ to $\{(0,1), (0,\beta)\}$ results in $\alpha=\beta$ and $b=1$, or $\alpha\beta=1$ and $b=\beta$.
	
	Conversely, if $\alpha\beta=1$, set $\varphi \in \Aut(\A^2_{\C}, \rho)$ to be defined by $(x,y)\mapsto (\beta x, \beta y)$. It sends $V(xy)$ to itself and $(0,0)$, $(1,0)$, $(\alpha, 0)$, $(0,1)$ and $(0,\alpha)$ to $(0,0)$, $(\beta, 0)$, $(1, 0)$, $(0,\beta)$ and $(0,1)$, respectively. As $\alpha$ and $\beta$ are real, this automorphism lifts to an isomorphism $(W_{\alpha}, \rho_{\alpha}) \overset{\sim}{\rightarrow} (W_{\beta}, \rho_{\beta})$, as claimed. This proves the proposition.
\end{proof}

Proposition~\ref{prop: third assertion of main theorem} implies Statement~\ref{item: main theorem, 3} of Theorem~\ref{thm: new main theorem}.

\small{

\bibliographystyle{alpha}
\bibliography{refs}

\begin{thebibliography}{DFMJ21}

\bibitem[BBP23]{MR4609022}
J.~Blanc, A.~Bot, and P.-M. Poloni.
\newblock Real forms of some {G}izatullin surfaces and {K}oras-{R}ussell
  threefolds.
\newblock {\em Publ. Mat.}, 67(2):851--890, 2023.

\bibitem[BD11]{MR2817414}
J.~Blanc and A.~Dubouloz.
\newblock Automorphisms of {$\Bbb{A}^{1}$}-fibered affine surfaces.
\newblock {\em Trans. Amer. Math. Soc.}, 363(11):5887--5924, 2011.

\bibitem[BD15]{MR3340326}
J.~Blanc and A.~Dubouloz.
\newblock Affine surfaces with a huge group of automorphisms.
\newblock {\em Int. Math. Res. Not. IMRN}, (2):422--459, 2015.

\bibitem[Ben16a]{MR3473660}
M.~Benzerga.
\newblock Real structures on rational surfaces and automorphisms acting
  trivially on {P}icard groups.
\newblock {\em Math. Z.}, 282(3-4):1127--1136, 2016.

\bibitem[Ben16b]{benzerga}
M.~Benzerga.
\newblock {\em Structures réelles sur les surfaces rationnelles}.
\newblock PhD thesis, Université d'Angers, Angers, 2016.

\bibitem[BGS94]{MR1165047}
E.~Bujalance, G.~Gromadski, and D.~Singerman.
\newblock On the number of real curves associated to a complex algebraic curve.
\newblock {\em Proc. Amer. Math. Soc.}, 120(2):507--513, 1994.

\bibitem[{Bot}21]{bot2021real}
A.~{Bot}.
\newblock {Real forms on rational surfaces}.
\newblock {\em Preprint}, 2021.
\newblock To appear in Ann. Sc. Norm. Super. Pisa Cl. Sci. (5).

\bibitem[BS64]{MR181643}
A.~Borel and J.-P. Serre.
\newblock Th\'{e}or\`emes de finitude en cohomologie galoisienne.
\newblock {\em Comment. Math. Helv.}, 39:111--164, 1964.

\bibitem[CF19]{MR4023392}
A.~Cattaneo and L.~Fu.
\newblock Finiteness of {K}lein actions and real structures on compact
  hyperk\"{a}hler manifolds.
\newblock {\em Math. Ann.}, 375(3-4):1783--1822, 2019.

\bibitem[DFMJ21]{MR4234102}
A.~Dubouloz, G.~Freudenburg, and L.~Moser-Jauslin.
\newblock Smooth rational affine varieties with infinitely many real forms.
\newblock {\em J. Reine Angew. Math.}, 771:215--226, 2021.

\bibitem[DIK00]{MR1795406}
A.~Degtyarev, I.~Itenberg, and V.~Kharlamov.
\newblock {\em Real {E}nriques surfaces}.
\newblock Lecture Notes in Math., Vol 1746. Springer-Verlag, Berlin, 2000.

\bibitem[DO19]{MR3934593}
T.-C. Dinh and K.~Oguiso.
\newblock A surface with discrete and nonfinitely generated automorphism group.
\newblock {\em Duke Math. J.}, 168(6):941--966, 2019.

\bibitem[DOY22]{MR4444125}
T.-C. Dinh, K.~Oguiso, and X.~Yu.
\newblock Smooth rational projective varieties with non-finitely generated
  discrete automorphism group and infinitely many real forms.
\newblock {\em Math. Ann.}, 383(1-2):399--414, 2022.

\bibitem[DOY23]{MR4529417}
Tien-Cuong Dinh, Keiji Oguiso, and Xun Yu.
\newblock Smooth complex projective rational surfaces with infinitely many real
  forms.
\newblock {\em J. Reine Angew. Math.}, 794:267--280, 2023.

\bibitem[FKZ08]{MR2426134}
H.~Flenner, S.~Kaliman, and M.~Zaidenberg.
\newblock Uniqueness of {$\Bbb C^*$}- and {$\Bbb C_+$}-actions on {G}izatullin
  surfaces.
\newblock {\em Transform. Groups}, 13(2):305--354, 2008.

\bibitem[GD75]{MR0376701}
M.~H. Gizatullin and V.~I. Danilov.
\newblock Automorphisms of affine surfaces. {I}.
\newblock {\em Izv. Akad. Nauk SSSR Ser. Mat.}, 39(3):523--565, 703, 1975.

\bibitem[GD77]{MR0437545}
M.~H. Gizatullin and V.~I. Danilov.
\newblock Automorphisms of affine surfaces. {II}.
\newblock {\em Izv. Akad. Nauk SSSR Ser. Mat.}, 41(1):54--103, 231, 1977.

\bibitem[Giz71]{MR0286791}
M.~H. Gizatullin.
\newblock Quasihomogeneous affine surfaces.
\newblock {\em Izv. Akad. Nauk SSSR Ser. Mat.}, 35:1047--1071, 1971.

\bibitem[Gro63]{MR0217087}
A.~Grothendieck.
\newblock {\em Rev\^{e}tements \'{e}tales et groupe fondamental. {F}asc. {I}:
  {E}xpos\'{e}s 1 \`a 5}.
\newblock Institut des Hautes \'{E}tudes Scientifiques, Paris, 1963.
\newblock Troisi\`eme \'{e}dition, corrig\'{e}e, S\'{e}minaire de
  G\'{e}om\'{e}trie Alg\'{e}brique, 1960/61.

\bibitem[Gro66]{MR217086}
A.~Grothendieck.
\newblock \'{E}l\'{e}ments de g\'{e}om\'{e}trie alg\'{e}brique. {IV}. \'{E}tude
  locale des sch\'{e}mas et des morphismes de sch\'{e}mas. {III}.
\newblock {\em Inst. Hautes \'{E}tudes Sci. Publ. Math.}, (28):255, 1966.

\bibitem[Har77]{MR0463157}
R.~Hartshorne.
\newblock {\em Algebraic geometry}.
\newblock Graduate Texts in Mathematics, No. 52. Springer-Verlag, New
  York-Heidelberg, 1977.

\bibitem[Har87]{MR875372}
B.~Harbourne.
\newblock Rational surfaces with infinite automorphism group and no
  antipluricanonical curve.
\newblock {\em Proc. Amer. Math. Soc.}, 99(3):409--414, 1987.

\bibitem[Kam75]{MR369380}
T.~Kambayashi.
\newblock On the absence of nontrivial separable forms of the affine plane.
\newblock {\em J. Algebra}, 35:449--456, 1975.

\bibitem[{L}ab22]{labinet}
T.~{L}. {L}abinet.
\newblock Projective varieties have at most countably many real forms.
\newblock {\em Preprint}, 2022.
\newblock To appear in C. R. Math. Acad. Sci. Paris.

\bibitem[Les18]{MR3773792}
J.~Lesieutre.
\newblock A projective variety with discrete, non-finitely generated
  automorphism group.
\newblock {\em Invent. Math.}, 212(1):189--211, 2018.

\bibitem[Nat78]{MR509374}
S.~M. Natanzon.
\newblock The order of a finite group of homeomorphisms of a surface onto
  itself, and the number of real forms of a complex algebraic curve.
\newblock {\em Dokl. Akad. Nauk SSSR}, 242(4):765--768, 1978.

\bibitem[Rus02]{MR1954070}
F.~Russo.
\newblock The antibirational involutions of the plane and the classification of
  real del {P}ezzo surfaces.
\newblock In {\em Algebraic geometry}, pages 289--312. de Gruyter, Berlin,
  2002.

\bibitem[Sil82]{MR678890}
R.~Silhol.
\newblock Real abelian varieties and the theory of {C}omessatti.
\newblock {\em Math. Z.}, 181(3):345--364, 1982.

\end{thebibliography}

}
\noindent

\bigskip\noindent
Anna Bot ({\tt annakatharina.bot@unibas.ch}),\\
Department of Mathematics and Computer Science, University of Basel, 4051 Basel, Switzerland

\end{document}